\documentclass[11pt,reqno]{amsart}
\usepackage[T1]{fontenc}
\usepackage{lmodern}
\usepackage[margin=1in]{geometry}
\usepackage{amsmath,amssymb,amsthm,mathtools}
\usepackage{microtype}
\usepackage{hyperref}
\usepackage{tikz}
\usepackage{graphicx}
\usetikzlibrary{arrows.meta,positioning}

\hypersetup{colorlinks=true,linkcolor=blue,citecolor=blue,urlcolor=blue}
\numberwithin{equation}{section}
\allowdisplaybreaks[1]
\usepackage{xcolor}

\newtheorem{theorem}{Theorem}[section]
\newtheorem{lemma}[theorem]{Lemma}
\newtheorem{proposition}[theorem]{Proposition}
\newtheorem{corollary}[theorem]{Corollary}
\theoremstyle{definition}
\newtheorem{definition}[theorem]{Definition}
\theoremstyle{remark}
\newtheorem{remark}[theorem]{Remark}
\newtheorem{example}[theorem]{Example}
\theoremstyle{plain}

\theoremstyle{definition}
\newtheorem{problem}[theorem]{Problem}

\tikzset{
  ff/vertex/.style={
    circle,
    draw=black,
    fill=white,
    minimum size=5.8mm,
    inner sep=1pt,
    font=\small
  },
  ff/reference/.style={
    ff/vertex,
    double,
    double distance=.6pt
  },
  ff/root/.style={
    ff/vertex,
    double,
    double distance=.6pt
  },
  ff/edge/.style={
    -{Stealth[length=2mm,width=1.35mm]},
    line width=.65pt,
    shorten <=.5pt,
    shorten >=.7pt
  },
  ff/xedge/.style={
    ff/edge,
    draw=blue!65!black
  },
  ff/yedge/.style={
    ff/edge,
    draw=orange!85!black,
    dashed
  },
  ff/auxedge/.style={
    ff/edge,
    draw=black!25,
    line width=.4pt
  },
  ff/operation/.style={
    -{Stealth[length=2mm]},
    draw=black!65,
    line width=.55pt,
    shorten <=1.5pt,
    shorten >=1.5pt
  },
  ff/label/.style={
    font=\small,
    text=black,
    fill=white,
    inner sep=1.2pt
  },
  ff/panel/.style={
    font=\small,
    align=center
  }
}

\newcommand{\Z}{\mathbb Z}
\newcommand{\Q}{\mathbb Q}

\newcommand{\ind}{\mathbf1}
\newcommand{\outdeg}{\operatorname{outdeg}}
\newcommand{\indeg}{\operatorname{indeg}}

\newcommand{\im}{\operatorname{im}}
\newcommand{\src}{\operatorname{src}}

\title[Positivity for arborescences of covers]
{Coefficient positivity for arborescence ratios\\ of covering digraphs}
\author{Muchen Ju, Junjie Ni, Kaixin Wang, \and Yihan Xiao}
\date{\today}

\begin{document}
\begin{abstract}
For a directed graph with a chosen root, the arborescence polynomial
sums the products of edge weights over all spanning trees directed
toward that root. In a finite graph cover, each lifted edge inherits
the variable of its base edge. At corresponding roots, the ratio
of the cover's arborescence polynomial to the base polynomial is
known to be an integer polynomial. We prove that every coefficient of this ratio is
nonnegative, resolving the positivity conjecture of Pylyavskyy~\cite{p,chepuri2022}.

The key to the proof is to allow fibers of unequal size and
replace the permutations defining a cover by arbitrary maps. For these more general systems, we prove a
stronger coefficient-positivity theorem for a determinant derived
from the graph Laplacian. The enlarged class is closed under two
reductions: contraction of an edge over a base loop and
elimination of a base vertex. Both yield nonnegative recurrences
for the relevant coefficients in terms of smaller systems, allowing
an inductive proof. We also pose the problem of
finding a direct combinatorial interpretation of the coefficients.
\end{abstract}
\maketitle

\section{Introduction}\label{sec:intro}

Let \(G\) be a finite directed multigraph, with loops and parallel
edges allowed. Assign an independent indeterminate \(x_e\) to each
edge \(e\), and write
\[
\mathbf x=(x_e)_{e\in E(G)},
\qquad
\mathbf x^\alpha=\prod_{e\in E(G)}x_e^{\alpha_e}.
\] 

An arborescence rooted at \(r\) is a spanning tree
whose edges are directed toward \(r\). Write
\[
 A_r(G)=\sum_{T\text{ rooted at }r}\prod_{e\in E(T)}x_e
\]
for its arborescence polynomial. In a \(k\)-fold cover
\(\widetilde G\to G\), every lifted edge inherits the weight of its
base edge.

In their work of $R$-systems, Galashin and Pylyavskyy \cite{gp} observed that, whenever
\(A_r(G)\not\equiv0\), the ratio
\(A_{\tilde r}(\widetilde G)/A_r(G)\) is independent of the choice of the root $r$ and $\tilde r$. Subsequently, Pylyavskyy \cite{p} conjectured that the ratio is a polynomial in the edge weights with nonnegative integer coefficients \cite[Conjecture~1.7]{chepuri2022}.
The polynomiality of the ratio was resolved by
Chepuri et al.~\cite{chepuri2022}, by realizing the ratio as a determinant of a Laplacian-like matrix.

While the positivity conjecture
\cite[Conjecture~1.7]{chepuri2022} remained open,
Cimasoni and Kassel~\cite[Theorem~A.1]{cimasoni2023} established
the undirected case using a sum-of-squares argument that relies
on Laplacian symmetry and does not extend to general
directed graphs.

The authors of~\cite{ju2024} studied the ratio for random covering graphs
and computed its expectation.
Here we prove coefficient nonnegativity for every fixed directed
graph cover.

\begin{theorem}\label{thm:covers}
Let \(\widetilde G\to G\) be a finite \(k\)-fold cover of a directed
multigraph, with independent base-edge weights inherited by the
lifted edges. If \(A_r(G)\not\equiv0\) and \(\tilde r\) lies over \(r\),
then
\[
 \frac{A_{\tilde r}(\widetilde G)}{A_r(G)}
 \in\Z_{\ge0}[\mathbf x].
\]
\end{theorem}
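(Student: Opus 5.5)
The plan follows the abstract: enlarge the class of objects so that it is closed under natural reductions, prove a stronger positivity statement for the larger class by induction, and recover Theorem~\ref{thm:covers} as a special case.

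\textbf{Step 1: determinant formula.} By the result of Chepuri et al.~\cite{chepuri2022}, the ratio \(A_{\tilde r}(\widetilde G)/A_r(G)\) equals the determinant of a Laplacian-type matrix, up to a normalization. I will describe this matrix concretely. The lifted Laplacian of \(\widetilde G\) splits over the fibers. Each base edge \(e\colon u\to v\) with weight \(x_e\) acts on the fiber over \(u\) through its permutation \(\sigma_e\colon \pi^{-1}(u)\to\pi^{-1}(v)\). I will pass to the complement of the constant vectors in each fiber. Then the ratio is, up to a sign and a normalization, \(\det\big(\sum_e x_e(\iota_u-P_{\sigma_e})\big)\) restricted to this complement. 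Here \(\iota_u\) is the identity block at \(u\) and \(P_{\sigma_e}\) is the permutation matrix of \(\sigma_e\).

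\textbf{Step 2: generalized systems.} I replace each \(\sigma_e\) by an arbitrary map \(f_e\colon F_u\to F_v\) between finite fibers, and I allow the fibers to have arbitrary, unequal sizes \(n_u\ge1\). For such a system \(\mathcal S\), I define \(D(\mathcal S)\) as the determinant of the associated reduced Laplacian. Concretely, I will quotient each fiber's function space by constants, or equivalently take a cofactor that deletes one row and column per base vertex. The strengthened claim is that \(D(\mathcal S)\in\Z_{\ge0}[\mathbf x]\) for every such system. Before going further I will check the normalization on trivial cases: a single vertex with loops, and fibers of size one, where \(D=1\).

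\textbf{Step 3: induction on total size.} The induction runs on the number of base vertices plus the total fiber size plus the number of edges. There are two reductions.

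\emph{(a) Loop contraction.} Suppose some base loop \(e\) at \(u\) has a map \(f_e\) that is not the identity. Pick \(a\in F_u\) with \(f_e(a)=b\ne a\). I expand \(D\) in \(x_e\) using a deletion–contraction identity. The part without \(x_e\) is \(D\) of the system with \(e\) removed. The coefficient contributed by the lifted edge \(a\to b\) is \(D\) of the system where \(a\) and \(b\) are merged, so the fiber shrinks by one; this is exactly where unequal fibers and non-bijective maps become necessary. Both contributions must enter with positive sign.

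\emph{(b) Vertex elimination.} If the base has at least two vertices, I eliminate a vertex \(w\) by a Schur complement. This replaces each path \(u\to w\to v\) by a new edge whose map is the composite and whose weight is the product of the two edge weights, divided by the total out-weight of \(w\). Composites of maps are again maps, which is why the class must allow arbitrary maps. I then need to show \(D(\mathcal S)=(\text{out-weight of }w)^{n_w-1}\cdot D(\mathcal S')\), or a positive sum of such terms, so that the coefficients remain nonnegative after clearing the denominator. Loops at \(w\) need separate handling, either by first reducing them with (a) or by geometric-series absorption.

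\textbf{Base cases.} The remaining base case is a single vertex whose loops all carry identity maps. In that case the reduced Laplacian vanishes or is trivial, and I will check directly that \(D\) is \(0\) or \(1\).

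\textbf{Main obstacle.} I expect the hardest part to be sign control in both reductions, especially (b). The Schur complement introduces division by the out-weight of \(w\), and the reduced determinant picks up powers of that weight. The coefficients of the new, merged edges are sums over pairs of edges, so I must verify that, after multiplying out, every monomial appears with a nonnegative integer coefficient. Proving exact divisibility by the out-weight of \(w\) is the key technical lemma. I would attempt it first via the matrix-tree-type expansion of the reduced determinant as a signed sum over functional subgraphs, and I would try to show that all the negative terms cancel in pairs.
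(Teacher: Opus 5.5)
\textbf{There is a genuine gap in step (b), the one you flag yourself.} Your framework matches the paper's: arbitrary maps between fibers of unequal size, the determinant on the quotient by fiberwise constants, loop contraction, and Schur elimination with composite edges. The identity $D(\mathcal S)=s^{\,n_w-1}D(\mathcal S')$ is correct, with composite-edge weight equal to the product of the two weights divided by the out-weight $s$ of $w$; it is Lemma~\ref{lem:schur}. The problem is that positivity does not pass through this identity.

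Write $D(\mathcal S')=\sum_m A_m\prod y^{m}$. After substitution, each term becomes $A_m s^{\,n_w-1-|m|}$ times a monomial in the old weights, and $|m|$ is not bounded by $n_w-1$. For example, if every out-edge of an in-neighbor $v$ of $w$ goes to $w$, then by source homogeneity every monomial has $y$-degree at least $|S_v|-1$, which can exceed $n_w-1$. The negative powers of $s$ cancel only collectively. Dividing a polynomial with nonnegative coefficients by $s$ can create negative coefficients, e.g.\ $(b_1^3+b_2^3)/(b_1+b_2)=b_1^2-b_1b_2+b_2^2$. So $D(\mathcal S')\ge0$ from induction tells you nothing about $D(\mathcal S)$. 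Your suggested pairwise cancellation in a functional-subgraph expansion is unspecified, and that is exactly where the whole difficulty lies.

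\textbf{The missing idea is to work with a single coefficient rather than whole polynomials.} By polarization (replace $x_e$ by a sum of $\alpha_e$ parallel copies carrying the same fiber map), $[\mathbf x^\alpha]P_F=c(F^\alpha)/\prod_e\alpha_e!$. Here $c=[\prod_e x_e]P$ is the squarefree coefficient, and $F^\alpha$ is \emph{balanced}: $\outdeg_G(v)=|S_v|-1$ for all $v$. In a loopless balanced base with at least one edge, a counting argument gives a vertex $u$ with $j=\indeg(u)\le d=\outdeg(u)$ and $d>0$. Extracting $a_1\cdots a_j$ forces exactly one composite edge per incoming edge, so only terms with $|m|=j\le d$ survive and no negative power of $s$ appears. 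Extracting $b_1\cdots b_d$ then gives
\[
c(F)=(d-j)!\sum_{\phi:[j]\hookrightarrow[d]}c(F_\phi),
\]
a manifestly nonnegative recurrence. Each $F_\phi$ keeps only the composite edges $e_{i,\phi(i)}$, is balanced, and has smaller quotient rank.

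\textbf{Your loop step also needs repair.} As written, $D(\mathcal S)=D(\mathcal S\setminus e)+x_eD(\mathcal S/(a\to b))$ is false: for the paper's running example the right side is $3x^2+2xy$, not $3x^2+3xy$. A correct single-lift identity is $D(\mathcal S)=D(\mathcal S^{(a)})+x_eD(\mathcal S/(a\to b))$. Here $\mathcal S^{(a)}$ keeps $e$ but resets $f_e(a)=a$. The second term is the directed contraction: delete $a$ with its out-lifts and redirect edges into $a$ to $b$. However, $\mathcal S^{(a)}$ does not decrease your induction measure. The paper instead applies $\partial_{x_e}P_F=\sum_{f_e(p)\ne p}P_{F/(p\to f_e(p))}$ to the squarefree coefficient, and deleting $e$ afterwards restores balance.
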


Our proof enlarges the class of graph covers to systems of arbitrary
maps between finite fibers. The outgoing Laplacian preserves the submodule of vectors
constant on each fiber, and we prove coefficient positivity of its
determinant on the quotient by this submodule.

\section*{Paper organization}

Section~\ref{sec:systems} introduces finite-fiber systems and their
quotient Laplacians, relates the quotient determinant to the covering
ratio, and uses polarization to reduce positivity to one squarefree coefficient
in each balanced system. Sections~\ref{sec:contraction} and~\ref{sec:elimination}
establish the two inductive recurrences, using contraction over a base
loop and vertex elimination, respectively. Section~\ref{sec:proofs}
completes the induction and proves the positivity theorems.
Section~\ref{subsec:combinatorial-interpretation} reviews the combinatorial
interpretation for double covers and discusses the problem of finding
such an interpretation for general covers. Figure~\ref{fig:proof-outline}
summarizes the proof.

\begin{figure}[htbp]
\centering
\begin{tikzpicture}[
  >={Stealth},
  box/.style={
    draw,
    rounded corners=2pt,
    align=center,
    inner sep=5pt,
    font=\small
  }
]
\node[box,text width=7cm] (systems)
  {Generalize to fiber system $F$\\
  Analyze positivity of $P_F:= \det Q_F$\\ 
  For graph covers, $P_F/k$ \\specializes to arborescence ratio};

\node[box,text width=7cm,below=8mm of systems] (balanced)
  {Polarization (Lemma~\ref{lem:polarization})\\
   Reduce to \(\left[\prod_{e\in E(G)}x_e\right]P_F\ge0\)\\
   for balanced systems};

\node[box,text width=3.6cm,minimum height=16mm,
  below=17mm of balanced,xshift=-25mm] (loop)
  {Loop contraction \\and deletion\\
   (Corollary~\ref{cor:loop})};

\node[box,text width=3.6cm,minimum height=16mm,
  below=17mm of balanced,xshift=25mm] (elimination)
  {Vertex elimination\\
   (Proposition~\ref{prop:elimination})};

\path (loop.south) -- coordinate[midway] (join)
  (elimination.south);

\node[box,text width=7cm,below=11mm of join] (conclusion)
  {Induction on the quotient rank};

\tikzset{
  proofarrow/.style={
    ->,
    shorten <=1.5pt,
    shorten >=2pt
  }
}

\draw[proofarrow]
  (systems.south) -- (balanced.north);

\draw[proofarrow]
  ([xshift=-2mm]balanced.south) --
  node[pos=.65,above left=2pt]
  {Base loop} (loop.north);

\draw[proofarrow]
  ([xshift=2mm]balanced.south) --
  node[pos=.65,above right=2pt]
  {Loopless base} (elimination.north);

\draw[proofarrow]
  (loop.south) -- ([xshift=-6mm]conclusion.north);

\draw[proofarrow]
  (elimination.south) -- ([xshift=6mm]conclusion.north);
\end{tikzpicture}
\caption{Proof outline. For a finite-fiber system \(F\) over \(G\) with fibers \((S_v)_{v \in V(G)}\), $Q_F$ is the quotient Laplacian defined in Section~\ref{sec:systems};
balance means
\(\outdeg_G(v)=|S_v|-1\) for every \(v\in V(G)\).
Both operations preserve balance and decrease the quotient rank
\(\sum_{v\in V(G)}(|S_v|-1)\), thereby allowing an inductive proof.}
\label{fig:proof-outline}
\end{figure}

\section{Finite-fiber systems and polarization}\label{sec:systems}

\begin{definition}\label{def:finite-fiber-system}
A \emph{finite-fiber system} \(F\) over a finite directed multigraph
\(G\) consists of a nonempty finite \emph{fiber} \(S_v\) for each
\(v\in V(G)\) and a \emph{fiber map}
\[
f_e:S_u\longrightarrow S_v \qquad \text{ for each edge } e : u\to v.
\]
Give each base edge \(e\) an independent weight variable \(x_e\).
The associated weighted digraph, also denoted by \(F\), has vertex
set \(V(F)=\bigsqcup_{v\in V(G)}S_v\) and edges
\((e,p):p\to f_e(p)\) of weight \(x_e\), indexed by
\(e:u\to v\in E(G)\) and \(p\in S_u\).
\end{definition}

The projection \(F\to G\), sending \(p\in S_v\) to \(v\) and
\((e,p)\) to \(e\), is a finite surjective graph \emph{opfibration}
in the terminology of~\cite[Definition~2.2]{boldi_vigna2002}:
each base edge has a unique lift from each vertex over its source.
The \(k\)-fold graph covers considered in~\cite{chepuri2022,ju2024}
are the special case in which every fiber has size \(k\) and every
fiber map is bijective. After labeling the fibers, this is the
permutation-voltage description of covers of Gross and
Tucker~\cite{gross_tucker1977}.

For a finite set \(A\), the coordinate basis of \(\Q[\mathbf x]^A\)
consists of the vectors with one coordinate equal to \(1\) and
all other coordinates equal to \(0\).
The outgoing Laplacian of the base graph \(G\) is the
\(\Q[\mathbf x]\)-linear endomorphism \(L_G\) of \(\Q[\mathbf x]^{V(G)}\)
defined by
\[
 (L_Ga)_u=\sum_{e:u\to v}x_e(a_u-a_v),\qquad u\in V(G).
\]

Let \(\displaystyle\mathcal H_F=\bigoplus_{v\in V(G)}\Q[\mathbf x]^{S_v}\)
be the free \(\Q[\mathbf x]\)-module with coordinates indexed by \(V(F)\).
The outgoing Laplacian of \(F\) is the endomorphism \(L_F\) of
\(\mathcal H_F\) defined by
\begin{equation}\label{eq:laplacian}
 (L_Fh)_p=\sum_{e:u\to v}x_e\bigl(h_p-h_{f_e(p)}\bigr),
 \qquad p\in S_u.
\end{equation}

In the coordinate basis of \(\mathcal H_F\), an edge
\((e,p):p\to q\), where \(q=f_e(p)\), contributes \(x_e\) to entry
\((p,p)\) and \(-x_e\) to entry \((p,q)\).
If \(q=p\), the two contributions cancel.

\begin{example}[A three-sheet cover]\label{ex:three-sheet-cover}
\pushQED{\qed}
Let \(G\) have one vertex \(u\) and two loops \(x,y\), with
the same symbols denoting their weights. On \(S_u=\{1,2,3\}\), take
\[
 f_x=(123),\qquad f_y=(12).
\]
This is a three-fold cover of \(G\), shown in
Figure~\ref{fig:three-sheet-cover}.
Both base loops contribute zero to \(L_G\).
In the coordinate bases indexed by \((u)\) and \((1,2,3)\),
respectively, the two Laplacians have matrices
\[
 [L_G]=\begin{bmatrix}0\end{bmatrix},
 \qquad
 [L_F]
 =
 \begin{bmatrix}
  x+y & -x-y & 0\\
  -y & x+y & -x\\
  -x & 0 & x
 \end{bmatrix}.
\]
The two lifted edges from sheet \(1\) to sheet \(2\), with weights
\(x\) and \(y\), give the first row:
\[
 (L_Fh)_1=(x+y)(h_1-h_2).
\]
The lifted \(y\)-loop at sheet \(3\) contributes zero, so the third
row gives \((L_Fh)_3=x(h_3-h_1)\).
The base has the empty tree as its unique arborescence, so
\(A_u(G)=1\). The lift has exactly two arborescences rooted at sheet
\(1\), with weights \(x^2\) and \(xy\), as shown in the figure.
\popQED
\end{example}

\begin{figure}[htbp]
\centering
\scalebox{1.2}{%
\begin{tikzpicture}[
  ff/vertex/.append style={
    minimum size=4mm,
    inner sep=.3pt,
    font=\scriptsize
  },
  ff/root/.append style={
    double distance=.4pt
  },
  ff/yedge/.append style={solid}
]
  % Base graph.
  \node[ff/vertex] (u) at (0,0) {$u$};
  \draw[ff/xedge]
    (u) edge[loop left,min distance=10mm]
    node[ff/label,left] {$x$} (u);
  \draw[ff/yedge]
    (u) edge[loop right,min distance=10mm]
      node[
    ff/label,
    fill=none,
    midway,
    right=3pt
         ] {$y$} (u);
  \node[ff/panel] at (0,-1.55) {Base graph \(G\)};

  % Full three-sheet lift.
  \begin{scope}[xshift=3.7cm]
    \node[ff/vertex] (c1) at (0,.9) {$1$};
    \node[ff/vertex] (c2) at (-.8,-.7) {$2$};
    \node[ff/vertex] (c3) at (.8,-.7) {$3$};

    \draw[ff/xedge] (c1) --
      node[ff/label,midway,right=2pt] {$x$} (c2);
    \draw[ff/xedge] (c2) --
      node[ff/label,midway,below=2pt] {$x$} (c3);
    \draw[ff/xedge] (c3) --
      node[ff/label,midway,right=2pt] {$x$} (c1);

    \draw[ff/yedge] (c1) to[bend right=40]
  node[
    ff/label,
    fill=none,
    midway,
    right=2pt
  ] {$y$} (c2);

    \draw[ff/yedge] (c2) to[bend left=75]
      node[
        ff/label,
        fill=none,
        midway,
        left=3pt
      ] {$y$} (c1);
    \draw[ff/yedge]
      (c3) edge[loop right,min distance=8mm]
      node[
        ff/label,
        fill=none,
        midway,
        right=3pt
      ] {$y$} (c3);

    \node[ff/panel] at (0,-1.55) {Three-sheet cover \(\widetilde G\)};
  \end{scope}

  % First arborescence.
  \begin{scope}[xshift=7.3cm]
    \node[ff/root]   (t1) at (0,.9) {$1$};
    \node[ff/vertex] (t2) at (-.8,-.7) {$2$};
    \node[ff/vertex] (t3) at (.8,-.7) {$3$};

    \draw[ff/xedge] (t2) --
      node[ff/label,midway,below=2pt] {$x$} (t3);
    \draw[ff/xedge] (t3) --
      node[ff/label,midway,right=2pt] {$x$} (t1);

    \node[ff/panel] at (0,-1.55) {Weight \(x^2\)};
  \end{scope}

  % Second arborescence.
  \begin{scope}[xshift=10.7cm]
    \node[ff/root]   (s1) at (0,.9) {$1$};
    \node[ff/vertex] (s2) at (-.8,-.7) {$2$};
    \node[ff/vertex] (s3) at (.8,-.7) {$3$};

    \draw[ff/yedge] (s2) --
      node[ff/label,midway,left=2pt] {$y$} (s1);
    \draw[ff/xedge] (s3) --
      node[ff/label,midway,right=2pt] {$x$} (s1);

    \node[ff/panel] at (0,-1.55) {Weight \(xy\)};
  \end{scope}
\end{tikzpicture}
}
\caption{The running example and its two arborescences rooted at
sheet \(1\). Blue edges have weight \(x\), and orange edges have
weight \(y\). Double circles indicate the roots of the two trees.}
\label{fig:three-sheet-cover}
\end{figure}

\begin{samepage}
The submodule \(\mathcal C_F\) of vectors constant on each fiber
is invariant under \(L_F\). Indeed, if \(h_q=a_v\) for \(q\in S_v\), then
\[
 (L_Fh)_p=\sum_{e:u\to v}x_e(a_u-a_v),\qquad p\in S_u.
\]
In coordinates \((a_v)_{v\in V(G)}\), the restriction of \(L_F\)
to \(\mathcal C_F\) is \(L_G\).
Let \(Q_F\) be the induced endomorphism of the quotient
\[
 \mathcal H_F/\mathcal C_F
 \cong\bigoplus_{v\in V(G)}U_v,
 \qquad U_v=\Q[\mathbf x]^{S_v}/\Q[\mathbf x]\ind_v,
\]
where \(\ind_v\) is the vector whose coordinates are all \(1\).
The images of the coordinate vectors indexed by all but one element
of \(S_v\) form a basis of \(U_v\). Write
\[
 d_v=\operatorname{rank}U_v=|S_v|-1,\qquad
 D(F)=\sum_vd_v,\qquad
 P_F(\mathbf x)=\det Q_F.
\]
The determinant of an endomorphism of the zero module is \(1\).
\end{samepage}

For \(e:u\to v\), pullback along \(f_e\) is the linear map
\(h\mapsto(h_{f_e(p)})_{p\in S_u}\) from \(\Q[\mathbf x]^{S_v}\)
to \(\Q[\mathbf x]^{S_u}\).
It sends \(\ind_v\) to \(\ind_u\), and hence induces the linear map
\begin{equation}\label{eq:pullback}
 T_e:U_v\longrightarrow U_u,\qquad
 T_e[h]=\bigl[(h_{f_e(p)})_{p\in S_u}\bigr].
\end{equation}
For the two-edge path \(e_1e_2\), where \(e_1:u\to v\) and
\(e_2:v\to w\), define \(f_{e_1e_2}=f_{e_2}\circ f_{e_1}\)
and let \(T_{e_1e_2}:U_w\to U_u\) be its induced pullback.
Since pullback reverses the order of composition,
\begin{equation}\label{eq:composition}
 T_{e_1e_2}=T_{e_1}T_{e_2}.
\end{equation}
With respect to the direct sum \(\bigoplus_vU_v\), the blocks
of \(Q_F\) are
\begin{equation}\label{eq:blocks}
 Q_F[u,v]
 =\delta_{uv}\left(\sum_{\src(e)=u}x_e\right)
   \operatorname{id}_{U_u}
 -\sum_{e:u\to v}x_eT_e,
\end{equation}
where the first term is present only when \(u=v\).
These descriptions require no choice of basis.
% Explicit bases and matrices are deferred to the discussion of
% random covers, where they will be used to compute expectations.

By the block formula \eqref{eq:blocks}, scaling all edge weights with source \(v\)
by \(t\) multiplies the \(U_v\)-block row of \(Q_F\) by \(t\),
and hence multiplies \(P_F\) by \(t^{d_v}\).
Thus \(P_F\) is homogeneous of degree \(d_v\) in the variables
with source \(v\); in particular,
\begin{equation}\label{eq:exponents}
 [\mathbf x^\alpha]P_F=0
 \quad\text{unless}\quad
 \sum_{\src(e)=v}\alpha_e=d_v
 \quad\text{for every }v.
\end{equation}

Our main result is the following.
\begin{theorem}\label{thm:main}
For every finite-fiber system \(F\),
\[
 P_F(\mathbf x)\in\Z_{\ge0}[\mathbf x].
\]
\end{theorem}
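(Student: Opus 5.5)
We follow the outline of Figure~\ref{fig:proof-outline} and argue by induction on the quotient rank \(D(F)\). The first step is a polarization reduction. Fix a monomial \(\mathbf x^\alpha\). By \eqref{eq:exponents} we may assume \(\sum_{\src(e)=v}\alpha_e=d_v\) for every \(v\).

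We then build a new system \(F^\alpha\) over a graph \(G^\alpha\). In \(G^\alpha\), each edge \(e\) is replaced by \(\alpha_e\) parallel copies, each carrying the same fiber map \(f_e\), and edges with \(\alpha_e=0\) are deleted. By \eqref{eq:blocks}, \(Q_F\) depends on the edges through the operator-valued weights \(\sum_e x_eT_e\) and the scalar row sums \(\sum_{\src(e)=u}x_e\). Substituting \(x_e=\sum_{i=1}^{\alpha_e}y_{e,i}\) into \(P_{F^\alpha}\) recovers \(P_F\) restricted to the support of \(\alpha\). Extracting the squarefree coefficient \(\prod_{e,i}y_{e,i}\) of \(P_{F^\alpha}\) then yields \(\alpha!\,[\mathbf x^\alpha]P_F\), up to the factor from setting unused variables to zero. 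That setting is harmless, because coefficients of monomials supported on the remaining edges are unchanged.

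The system \(F^\alpha\) is \emph{balanced}, meaning \(\outdeg(v)=|S_v|-1\) at every vertex. Since \(\alpha!>0\), Theorem~\ref{thm:main} therefore reduces to showing
\[
 c(F):=\Bigl[\prod_{e\in E(G)}x_e\Bigr]P_F\ \ge 0
\]
for every balanced \(F\). Integrality is automatic, since \(Q_F\) has integer-polynomial entries in a lattice basis. This reduction is the content of Lemma~\ref{lem:polarization}.

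For the induction on \(D(F)\) in the balanced case, the base case is \(D=0\). Then there are no edges, and \(c(F)=1\). Suppose first that \(G\) has a loop \(\ell\) at \(v\). The coefficient of \(x_\ell\) is obtained from the \(U_v\)-row of \(Q_F\) with \(x_\ell(\mathrm{id}-T_\ell)\) in place. We would expand \(\det\) multilinearly in that row. Here \(\mathrm{id}-T_\ell\) on \(U_v\) has a rank that we compute from the map \(f_\ell:S_v\to S_v\).

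The target is Corollary~\ref{cor:loop}: the loop coefficient should equal a nonnegative combination of \(c(F')\) over systems \(F'\) in which one lift \((\ell,p)\) has been contracted. Contraction identifies \(p\) with \(f_\ell(p)\) in \(S_v\), pushing the other maps forward and deleting \(\ell\). The candidate formula is
\[
 c(F)=\sum_{p:\,f_\ell(p)\neq p} c(F/(\ell,p))
\]
or a similar sum, motivated by the deletion--contraction identity \(L=L_{\text{del}}+x_\ell(\text{rank-one})\). After contraction, \(|S_v|\) drops by one and \(\outdeg(v)\) drops by one, so balance is preserved and \(D\) decreases.

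If \(G\) is loopless, we pick a vertex \(v\) and eliminate it by a Schur complement of the block \(Q_F[v,v]=(\sum_{\src e=v}x_e)\mathrm{id}\). This block is a scalar, so the complement is polynomial after clearing the factor \(s_v=\sum_{\src e=v}x_e\):
\[
 P_F=s_v^{d_v}\det\Bigl(Q[w,w']-\sum_{\substack{a:w\to v\\ b:v\to w'}}\frac{x_ax_b}{s_v}T_aT_b\Bigr).
\]
By \eqref{eq:composition}, \(T_aT_b=T_{ab}\), so the complement is \(Q\) of a system with composite edges \(ab\) carrying weights \(x_ax_b/s_v\), together with corrected diagonals.

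Extracting the squarefree coefficient should give Proposition~\ref{prop:elimination}. The idea is that \(c(F)\) equals a sum over assignments that pair the \(d_v\) out-edges of \(v\) with in-edges to \(v\), each term being \(c\) of a balanced system with \(v\) removed and composite edges \(f_b\circ f_a\). Since the terms carry positive multinomial multiplicities, the recurrence is nonnegative, and \(D\) drops by \(d_v\).

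The main obstacle is this elimination step. The factor \(s_v^{d_v}\) combined with \(1/s_v\)-expansions must cancel cleanly in the squarefree coefficient, and the diagonal corrections \(\sum_a x_a\), which come from in-edges whose targets are redirected, must still match the balanced form. We would first verify the case \(d_v=1\), where the Schur correction is rank-structured, and then handle general \(d_v\) by polarizing the out-edges of \(v\) further.

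Finally, Theorem~\ref{thm:covers} follows by identifying \(P_F/k=A_{\tilde r}/A_r\) for covers.
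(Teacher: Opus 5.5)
Your skeleton matches the paper's: polarization to the squarefree coefficient of a balanced system, induction on \(D(F)\), loop contraction, and Schur elimination. Your lattice-basis argument for integrality is also a valid shortcut. However, neither recurrence is actually established, and the loopless step has a genuine gap. You eliminate an arbitrary vertex and leave the interaction of \(s^{d}\) with the \(1/s\) factors unresolved.

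\textbf{Missing idea: the choice of vertex.} Take \(u\) with \(\outdeg(u)=d>0\) and \(\indeg(u)=j\le d\). Such a vertex exists by degree counting, since \(\sum_v\indeg(v)=|E(G)|=\sum_{\outdeg(v)>0}\outdeg(v)\) (Lemma~\ref{lem:choice}). Then the extraction goes as follows.
\begin{enumerate}
\item Give the composite edges independent weights \(y_{it}\), expand \(P_{\widehat F}\) in \(\mathbf y\), and substitute \(y_{it}=a_ib_t/s\).
\item Extract \(a_1\cdots a_j\) first. Each incoming label then selects exactly one composite edge, so the surviving power is \(s^{d-j}\). This is a polynomial precisely because \(j\le d\).
\item Extract \(b_1\cdots b_d\). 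This forces the selection \(\phi\) to be injective, with multiplicity \([\prod_{t\notin\im\phi}b_t]s^{d-j}=(d-j)!\).
\item The remaining \(E_0\)-squarefree coefficient is exactly \(c(F_\phi)\).
\end{enumerate}
The direction of the assignment matters. Each in-edge receives one out-edge, and that is what keeps \(\outdeg(v_i)\), hence balance, intact.

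At a vertex with \(j>d\), the terms carry negative powers of \(s\) and cannot be read off one at a time, and the injection formula can fail outright. Take \(a_1:v_1\to u\), \(a_2:v_2\to u\), \(b_1:u\to v_1\), with all fibers of size two and \(f_{b_1}\circ f_{a_1}\) the transposition of \(S_{v_1}\). Then \(P_F=2a_1a_2b_1\), so \(c(F)=2\), yet there is no injection \([2]\hookrightarrow[1]\).

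Your fallback of ``polarizing the out-edges further'' cannot help. In a balanced system every out-edge already has exponent one, and \(d_u=|S_u|-1\) is fixed by the fiber. Your worry about the diagonal dissolves: \(\sum_t a_ib_t/s=a_i\), so no correction is needed.

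\textbf{The loop step is likewise only a target.} On the quotient, \(x_\ell(\operatorname{id}-T_\ell)\) is a sum of rank-one pieces, one per moved lift, not a single rank-one term. In a quotient basis, the lift at the omitted element generally contributes to every row. So expanding in ``that row'' does not produce the contracted systems, whose quotient at \(v\) even has smaller rank.

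The paper instead works on the full module \(\mathcal H_F\).
\begin{enumerate}
\item Differentiate \(\det(zI+L_F)\) row by row. Each lift with \(q=f_\ell(p)\ne p\) gives a row with \(1\) in column \(p\) and \(-1\) in column \(q\).
\item Add column \(p\) to column \(q\) and expand; this yields \(\det(zI+L_{F/(p\to q)})\).
\item Cancel the factor \(\det(zI+L_G)\) using \(\det(zI+L_H)=\det(zI+L_G)\det(zI+Q_H)\). This factor is common to \(F\) and all contractions and is independent of \(x_\ell\).
\end{enumerate}
The auxiliary \(z\) is essential here, because \(\det L_G=0\).
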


The following identity relates \(P_F\) to arborescence polynomials.
It uses the determinant factorization induced by the fiber-constant
submodule; for related factorizations of covering operators, see
\cite[Corollary~3.8]{cimasoni2023}.
\begin{lemma}[Arborescence sum identity]\label{lem:arborescence-sum}
For every finite-fiber system \(F\) over \(G\),
\(P_F\in\Z[\mathbf x]\) and
\begin{equation}\label{eq:arborescence-sum}
 \sum_{p\in V(F)}A_p(F)
 =P_F(\mathbf x)\sum_{r\in V(G)}A_r(G).
\end{equation}
\end{lemma}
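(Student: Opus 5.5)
We will prove \eqref{eq:arborescence-sum} by comparing coefficients of \(t^1\) in two characteristic polynomials, using the matrix-tree theorem in its "sum over all roots" form. For any weighted digraph \(H\) with outgoing Laplacian \(L_H\) on \(N\) vertices, the directed matrix-tree theorem says that the principal minor of \(L_H\) obtained by deleting row and column \(p\) equals \(A_p(H)\). Summing over \(p\), the coefficient of \(t\) in \(\det(t\,\mathrm{id}+L_H)\) is \(\sum_{p}A_p(H)\); equivalently, \(\sum_p A_p(H)\) is the coefficient of \(t\) in \(\det(t+L_H)\). Loops are harmless, since they contribute zero to \(L_H\) and lie in no arborescence.

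Apply this to \(H=F\) and \(H=G\). Since \(\mathcal C_F\) is an \(L_F\)-invariant free submodule with free quotient, and \(\mathcal C_F\) is a direct summand (complement: the span of all but one coordinate vector in each fiber), we may choose a basis of \(\mathcal H_F\) adapted to \(0\subset\mathcal C_F\subset\mathcal H_F\). In this basis \(t+L_F\) is block upper triangular with diagonal blocks \(t+L_G\) (by the computation showing the restriction is \(L_G\)) and \(t+Q_F\). Hence
\[
 \det(t+L_F)=\det(t+L_G)\,\det(t+Q_F).
\]
The change of basis is unimodular over \(\Z\): its matrix consists of the indicator vectors \(\ind_v\) together with coordinate vectors. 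Moreover, \(L_F\) has integer-coefficient entries. So, in the basis of \(U_v\) given by all but one coordinate vector, the matrix of \(Q_F\) has entries in \(\Z[\mathbf x]\), and therefore \(P_F=\det Q_F\in\Z[\mathbf x]\). One can also verify this directly from \eqref{eq:blocks}: \(T_e\) in such a basis is an integer matrix.

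Next, compare coefficients of \(t^1\). The constant term of \(\det(t+L_G)\) is \(\det L_G=0\), because the all-ones vector lies in the kernel (the row sums vanish). Hence the \(t\)-coefficient of the product equals the \(t\)-coefficient of \(\det(t+L_G)\) times the constant term \(\det Q_F=P_F\). Combining with the matrix-tree expressions gives
\[
 \sum_{p\in V(F)}A_p(F)=P_F\sum_{r\in V(G)}A_r(G).
\]
When \(D(F)=0\) the quotient is zero and \(P_F=1\), which is consistent, since \(F\cong G\) in that case.

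The main point needing care is the matrix-tree theorem in the directed, outgoing-Laplacian convention with multigraphs and loops. We must check that deleting row/column \(p\) counts arborescences oriented toward \(p\). This follows because row \(q\) records out-edges of \(q\) and each non-root vertex picks exactly one out-edge. The remaining steps, block triangularity over the polynomial ring and integrality, are routine.
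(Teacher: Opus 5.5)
Your proof is correct and follows the paper's argument: factor $\det(zI+L_F)=\det(zI+L_G)\det(zI+Q_F)$ through the invariant submodule $\mathcal C_F$, then compare $z$-coefficients using $\det L_G=0$ and the directed matrix-tree theorem. The one real difference is integrality. The paper obtains $\det(zI+Q_F)\in\Z[\mathbf x,z]$ by dividing by the monic factor $\det(zI+L_G)$, whereas you read it off from the integer matrix of $Q_F$ in the basis of coordinate-vector images, which is equally valid and a bit more direct. One small caveat: your step $\det L_G=0$ needs $G$ nonempty, and the empty case, where both sides vanish, is trivial, as the paper notes.
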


\begin{proof}
The invariant submodule \(\mathcal C_F\) and its free quotient give,
for an indeterminate \(z\),
\begin{equation}\label{eq:factorization}
 \det(zI+L_F)=\det(zI+L_G)\det(zI+Q_F).
\end{equation}
The two graph determinants have integer coefficients, and the
base factor \(\det(zI+L_G)\) is monic in \(z\).
Monic polynomial division therefore gives
\(\det(zI+Q_F)\in\Z[\mathbf x,z]\), hence \(P_F\in\Z[\mathbf x]\).

For every nonempty directed graph \(H\), the coefficient of \(z\)
in \(\det(zI+L_H)\) is the sum of the principal cofactors of \(L_H\).
The directed matrix-tree theorem~\cite{chaiken_kleitman1978}
identifies the cofactor at \(r\) with \(A_r(H)\), so
\[
 [z]\det(zI+L_H)=\sum_{r\in V(H)}A_r(H).
\]

For nonempty \(G\), we have \(\det L_G=0\), so taking the
coefficient of \(z\) in the Laplacian factorization~\eqref{eq:factorization}
gives the stated identity relating the two arborescence sums.
If \(G\) is empty, both sums are zero.
\end{proof}

We will use this Laplacian factorization again in
Section~\ref{sec:contraction}: the base determinant \(\det(zI+L_G)\)
can be cancelled before setting \(z=0\).

\subsection{The covering case}
For a \(k\)-fold cover, the quotient determinant recovers the
arborescence ratio. The following is the covering determinant identity
and integrality result of Chepuri et al.\ in the present quotient
notation~\cite[Theorem~1.3 and Corollary~1.6]{chepuri2022}.

\begin{proposition}[Covering determinant identity]
\label{prop:cover-ratio}
Let \(F=\widetilde G\) be a \(k\)-fold cover of \(G\).
If \(A_r(G)\not\equiv0\) and \(\tilde r\) lies over \(r\), then
\begin{equation}\label{eq:cover-ratio}
 \frac{A_{\tilde r}(\widetilde G)}{A_r(G)}
 =\frac1kP_F(\mathbf x)\in\Z[\mathbf x].
\end{equation}
\end{proposition}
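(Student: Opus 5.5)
The plan is to combine Lemma~\ref{lem:arborescence-sum} with the root-independence of the ratio in covers, so that the arborescence-sum identity becomes a statement about a single root. The integrality part \(P_F\in\Z[\mathbf x]\) is already given by Lemma~\ref{lem:arborescence-sum}, so only the identity~\eqref{eq:cover-ratio} has to be proved.

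The first step establishes root-independence inside a single fiber. For a cover, every fiber map is a bijection, so each lifted vertex \(p\in S_u\) has exactly one outgoing lift of each base edge with source \(u\), and so \(L_F\) has the same row structure at every sheet. I would use the stationary-vector form of the matrix-tree theorem. The vector \(\pi_F=(A_p(F))_{p}\) spans the left kernel of \(L_F\) whenever it is nonzero, since \(\pi L=0\) for the cofactor vector of any Laplacian with zero row sums. Next, I would show that pushing \(\pi_F\) forward by summing over fibers gives a left null vector of \(L_G\). This holds because the quotient map \(\mathcal H_F\to\mathcal C_F^*\) is compatible with \(L_F\) restricted to \(\mathcal C_F\), whose restriction is \(L_G\). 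Then I would need the finer fact that \(A_{\tilde r}(\widetilde G)\) is the same for all \(\tilde r\) over \(r\).

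For this finer fact I would cite Galashin--Pylyavskyy's observation quoted in the introduction, which says the ratio \(A_{\tilde r}(\widetilde G)/A_r(G)\) does not depend on the choice of \(r\) and \(\tilde r\). If a self-contained argument is wanted, I would show that the column vector \(\ind\) on a fiber has a property dual to this one. Bijectivity of the fiber maps makes the adjoint (column) sums of \(L_F\) over a fiber match those of \(L_G\). Hence \(q=(A_r(G))_{p\in S_r}\) is a left null vector of \(L_F\), and by one-dimensionality of the kernel, which holds because \(A_r(G)\not\equiv0\), it is proportional to \(\pi_F\). This kernel-comparison step is the main obstacle. One must check that the left kernel of \(L_F\) is one-dimensional when \(\pi_F\neq0\), and deduce \(\pi_F\neq0\) from \(A_r(G)\neq0\). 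I would try to get the latter by lifting an arborescence of \(G\) edge by edge to a spanning forest of \(F\) and then to a full arborescence.

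Given \(\pi_F=c\cdot(A_{u}(G))_{p\in S_u}\) for a scalar \(c\), the rest is direct. Summing over all vertices gives \(\sum_pA_p(F)=ck\sum_rA_r(G)\), so Lemma~\ref{lem:arborescence-sum} yields \(ck=P_F\), that is, \(c=P_F/k\). Evaluating the proportionality at \(\tilde r\) then gives \(A_{\tilde r}(\widetilde G)=\tfrac1kP_F\,A_r(G)\). Dividing by \(A_r(G)\not\equiv0\) gives~\eqref{eq:cover-ratio}, and membership in \(\Z[\mathbf x]\) follows because the left-hand side is a ratio of integer polynomials that equals a polynomial in \(\Q[\mathbf x]\). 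Gauss's lemma applies here, since \(A_r(G)\) has a coefficient equal to \(1\) on some tree monomial and is therefore primitive.
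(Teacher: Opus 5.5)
Your strategy is sound, but one step you say must be checked is false. The condition $A_r(G)\not\equiv0$ does \emph{not} imply $\pi_F\neq0$, and it does not make the left kernel of $L_F$ one-dimensional. So lifting an arborescence of $G$ to one of $\widetilde G$ cannot work. For example, take $G$ to be a single edge $e:u\to v$. Then $A_v(G)=x_e$, but every $2$-fold cover has four vertices and only two edges, so it has no spanning arborescence. More generally, the trivial cover $G\times[k]$ with $k\ge2$ is disconnected. In such cases $\pi_F=0$, the left kernel of $L_F$ has dimension at least two, and your appeal to ``one-dimensionality of the kernel, which holds because $A_r(G)\not\equiv0$'' fails.

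The repair is a case split.
\begin{itemize}
\item If $\pi_F=0$, take $c=0$. Lemma~\ref{lem:arborescence-sum} then gives $P_F\sum_vA_v(G)=0$. Every $A_v(G)$ has nonnegative coefficients and $A_r(G)\not\equiv0$, so the sum is nonzero and $P_F=0$. Both sides of \eqref{eq:cover-ratio} then vanish.
\item If $\pi_F\neq0$, some principal cofactor of $L_F$ is nonzero. Hence $L_F$ has corank one over $\Q(\mathbf x)$, and its left kernel is spanned by $\pi_F$. Your lifted vector $q$ is a nonzero left null vector. Bijectivity of the fiber maps gives $\sum_{p\in S_w}(L_F)_{p,p'}=(L_G)_{w,v}$ for $p'\in S_v$, and $(A_w(G))_w L_G=0$. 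Therefore $\pi_F=c\,q$, and the rest of your argument goes through, including the Gauss-lemma integrality step.
\end{itemize}

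With this fix, your proof takes a genuinely different route from the paper's. The paper imports root independence and integrality from Chepuri et al. That citation supplies a common $R\in\Z[\mathbf x]$ with $A_p(\widetilde G)=R\,A_v(G)$ whenever $A_v(G)\not\equiv0$. The paper then adds a projection argument showing that $A_v(G)\equiv0$ forces $A_p(\widetilde G)\equiv0$ for $p\in S_v$. Finally it compares the two root sums using Lemma~\ref{lem:arborescence-sum}.

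Your left-kernel comparison proves root independence directly and handles roots with $A_v(G)\equiv0$ automatically through the proportionality. Gauss's lemma replaces the cited integrality. So your version is self-contained apart from the matrix-tree theorem.

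If you instead take your first option and cite Galashin--Pylyavskyy, you only get proportionality at roots with $A_v(G)\not\equiv0$. You would then need the paper's projection argument before summing over all roots.
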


The proof of Proposition~\ref{prop:cover-ratio} is deferred to
Section~\ref{sec:proofs}. Together with Theorem~\ref{thm:main},
it gives Theorem~\ref{thm:covers}.

For the cover in Example~\ref{ex:three-sheet-cover},
the covering determinant identity~\eqref{eq:cover-ratio} gives
\[
 P_F=3A_1(F)=3x^2+3xy.
\]
We return to this system in Section~\ref{sec:contraction}.

\subsection{Polarization}
We use coefficient polarization; see
\cite[Section~2.1]{drapal_vojtechovsky2009} for the general formalism.
Here the required substitution is realized by parallel copies of base edges.

\begin{lemma}[Polarization]\label{lem:polarization}
To prove Theorem~\ref{thm:main}, it suffices to prove
\[
 c(F):=\left[\prod_{e\in E(G)}x_e\right]P_F\ge0
\]
for every finite-fiber system \(F\) whose base graph \(G\) satisfies
\begin{equation}\label{eq:balanced}
 \outdeg_G(v)=d_v\qquad \forall \ v\in V(G).
\end{equation}

\end{lemma}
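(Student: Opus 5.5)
Fix a finite-fiber system \(F\) over \(G\) and a monomial \(\mathbf x^\alpha\); we must show \([\mathbf x^\alpha]P_F\ge0\). By \eqref{eq:exponents} we may assume \(\sum_{\src(e)=v}\alpha_e=d_v\) for every \(v\); otherwise the coefficient is zero. The plan is to build a new finite-fiber system \(F^\alpha\) over a new base graph \(G^\alpha\) with the same vertices and fibers. Each edge \(e:u\to v\) of \(G\) is replaced by \(\alpha_e\) parallel edges \(e^{(1)},\dots,e^{(\alpha_e)}:u\to v\), each carrying its own variable \(y_{e,i}\) and the same fiber map \(f_{e^{(i)}}=f_e\). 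Edges with \(\alpha_e=0\) are deleted. Then \(\outdeg_{G^\alpha}(v)=\sum_{\src(e)=v}\alpha_e=d_v\), so \(F^\alpha\) satisfies the balance condition \eqref{eq:balanced}. By hypothesis, \(c(F^\alpha)\ge0\).

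Next we relate \(P_{F^\alpha}\) to \(P_F\). By the block formula \eqref{eq:blocks}, \(Q_{F^\alpha}\) is obtained from \(Q_F\) by substituting \(x_e\mapsto \sum_i y_{e,i}\) for \(\alpha_e>0\) and \(x_e\mapsto 0\) for \(\alpha_e=0\). This works because parallel copies share \(T_e\), and both the diagonal term and the off-diagonal term are linear in \(x_e\). Hence
\[
 P_{F^\alpha}(\mathbf y)=P_F(\mathbf x)\big|_{x_e=\sum_i y_{e,i}\ (\alpha_e>0),\ x_e=0\ (\alpha_e=0)}.
\]
The setting \(x_e=0\) only discards monomials containing such \(x_e\), and those do not affect \(\mathbf x^\alpha\).

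Now extract the squarefree coefficient \(\prod_{e,i}y_{e,i}\). A monomial \(\mathbf x^\beta\) of \(P_F\) (supported on edges with \(\alpha_e>0\)) becomes \(\prod_e(\sum_i y_{e,i})^{\beta_e}\). The total degree in the \(y_{e,\cdot}\) variables must equal \(\alpha_e\) to hit the target, which forces \(\beta=\alpha\). The multinomial expansion of \((\sum_{i=1}^{\alpha_e}y_{e,i})^{\alpha_e}\) then contributes \(\alpha_e!\) to the squarefree monomial. Therefore
\[
 c(F^\alpha)=\Bigl(\prod_e\alpha_e!\Bigr)[\mathbf x^\alpha]P_F,
\]
and nonnegativity of \(c(F^\alpha)\) gives \([\mathbf x^\alpha]P_F\ge0\). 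Integrality of the coefficients comes from Lemma~\ref{lem:arborescence-sum}, so \(P_F\in\Z_{\ge0}[\mathbf x]\).

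The only point needing care is the substitution identity. Specifically, one must check that \(Q_{F^\alpha}\) really is the specialized \(Q_F\) as an endomorphism of the same quotient \(\bigoplus_v U_v\). The fibers are unchanged and the \(T_e\) depend only on \(f_e\), so this follows directly from \eqref{eq:blocks} without choosing bases. The degenerate case \(D(F)=0\) is trivial, since then \(P_F=1\).
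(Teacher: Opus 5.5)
Your proposal is correct and follows the paper's proof essentially step for step. You use the same parallel-copy system \(F^\alpha\) over \(G^\alpha\) (deleting edges with \(\alpha_e=0\)), the same linear substitution \(x_e\mapsto\sum_i x_{e,i}\) justified by the block formula \eqref{eq:blocks}, the same factor \(\prod_e\alpha_e!\) as in \eqref{eq:polarization}, and integrality taken from Lemma~\ref{lem:arborescence-sum}.
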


\begin{proof}
Integrality of \(P_F\) was proved in Lemma~\ref{lem:arborescence-sum}.
By source homogeneity, the only coefficients that can be nonzero
have exponent vectors \(\alpha\in\Z_{\ge0}^{E(G)}\) satisfying
\(\sum_{\src(e)=v}\alpha_e=d_v\) for every \(v\), as in
\eqref{eq:exponents}. Fix such an \(\alpha\).
Form the base graph \(G^\alpha\) by keeping all vertices of \(G\)
and replacing each edge \(e\) by \(\alpha_e\) labeled parallel
copies \((e,i)\), \(1\le i\le\alpha_e\); an edge with
\(\alpha_e=0\) is deleted.
Define \(F^\alpha\) over \(G^\alpha\) by keeping the fibers
\(S_v\) and giving each copy \((e,i)\) the fiber map \(f_e\)
and an independent weight variable \(x_{e,i}\).
The source-degree equalities above imply that this system satisfies
the balance condition \(\outdeg_{G^\alpha}(v)=|S_v|-1\), since
\[
 \outdeg_{G^\alpha}(v)=\sum_{\src(e)=v}\alpha_e=d_v.
\]

Each edge contributes linearly to the quotient-Laplacian blocks
in~\eqref{eq:blocks}, so
\[
 P_{F^\alpha}
 =\left.P_F\right|_{x_e=\sum_{i=1}^{\alpha_e}x_{e,i}},
\]
where an empty sum is zero.
Setting a deleted weight to zero selects exponent zero on that
edge. For each remaining edge \(e\), only the power
\(x_e^{\alpha_e}\) contributes to \(x_{e,1}\cdots x_{e,\alpha_e}\),
with coefficient \(\alpha_e!\). Hence
\begin{equation}\label{eq:polarization}
 c(F^\alpha)
 =\left(\prod_{e\in E(G)}\alpha_e!\right)
   [\mathbf x^\alpha]P_F.
\end{equation}
The factorial multiplier is positive, proving the reduction.
\end{proof}

We call a system \(F\) \emph{balanced} if its base graph satisfies
\(\outdeg_G(v)=|S_v|-1\) for every vertex, as in~\eqref{eq:balanced}.
This condition concerns the outdegrees and fiber sizes; it does not
require equality of indegree and outdegree.
Each parallel edge is counted separately, and each loop contributes
one to both the indegree and the outdegree.
In a balanced system, \(|E(G)|=D(F)\).
The next two sections give recurrences for \(c(F)\) that decrease
\(D(F)\).

\section{Contraction over a base loop}\label{sec:contraction}

\begin{definition}[Directed edge contraction]
\label{def:directed-contraction}
For a weighted directed multigraph \(H\) and a nonloop edge
\(p\to q\), we define the \emph{directed contraction}
\(H/(p\to q)\) by deleting \(p\) and all edges with source \(p\),
and redirecting every remaining edge with target \(p\) to \(q\).
Surviving edges retain their identities and weights, even when
redirection creates loops or parallel edges.
\end{definition}

This is the directed contraction used by Chepuri et al.\ in the first
proof of their Theorem~4.2~\cite[Section~4.2, pp.~341--342]{chepuri2022}.
The row-and-column argument below applies the same matrix operation
to \(zI+L_F\).

For a base loop \(e:u\to u\) and \(p\in S_u\) with
\(q=f_e(p)\ne p\), the contracted graph \(F/(p\to q)\)
is again a finite-fiber system over the same base graph \(G\).
Explicitly, take \(S'_u=S_u\setminus\{p\}\) and
\(S'_v=S_v\) for \(v\ne u\). Let \(\rho_u:S_u\to S'_u\)
send \(p\) to \(q\) and fix every other element, and let
\(\rho_v=\operatorname{id}_{S_v}\) for \(v\ne u\).
For each base edge \(a:v\to w\), set
\[
 f'_a=\rho_w\circ f_a|_{S'_v}:S'_v\longrightarrow S'_w.
\]
The base edge labels and weights are unchanged. These maps restrict
the source fibers and redirect targets exactly as in
Definition~\ref{def:directed-contraction}; all fibers remain nonempty
because \(q\in S'_u\).

% \begin{definition}[Contraction over a base loop]\label{def:loop-contraction}
% Let \(e:u\to u\) be a base loop, and choose \(p\in S_u\) with
% \(q=f_e(p)\ne p\). The \emph{contraction} \(F'=F/(p\to q)\) is the
% finite-fiber system over \(G\) with fibers
% \[
%  S'_u=S_u\setminus\{p\},\qquad S'_v=S_v\quad(v\ne u).
% \]
% For each base edge \(a:v\to u\), including a loop at \(u\), define
% \(f'_a:S'_v\to S'_u\) by
% \[
%  f'_a(s)=
%  \begin{cases}
%  f_a(s),& f_a(s)\ne p,\\
%  q,& f_a(s)=p,
%  \end{cases}
%  \qquad s\in S'_v.
% \]
% For an edge \(a:u\to v\) with \(v\ne u\), set
% \(f'_a=f_a|_{S'_u}\). All other fiber maps are unchanged.
% The base graph, edge labels, and weights are unchanged.
% Every fiber is nonempty, since \(q\in S'_u\).
% \end{definition}

% In the associated digraph, contraction deletes \(p\) and all edges
% with source \(p\), and redirects every remaining edge with target
% \(p\) to \(q\).

\begin{lemma}[Loop derivative]\label{lem:loop}
Let \(F\) be a finite-fiber system over \(G\), let \(e:u\to u\)
be a base loop, and let \(z\) be an additional indeterminate. Then
\begin{equation}\label{eq:loop-derivative}
 \frac{\partial}{\partial x_e}\det(zI+Q_F)
 =\sum_{\substack{p\in S_u\\f_e(p)\ne p}}
 \det\bigl(zI+Q_{F/(p\to f_e(p))}\bigr).
\end{equation}
\end{lemma}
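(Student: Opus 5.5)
The plan is to lift the identity from the quotient to the full outgoing Laplacian, using the factorization~\eqref{eq:factorization} from the proof of Lemma~\ref{lem:arborescence-sum}:
\[
\det(zI+L_F)=\det(zI+L_G)\det(zI+Q_F).
\]
The base loop \(e\) contributes nothing to \(L_G\), since its two contributions \(x_e(a_u-a_u)\) cancel. Hence \(\det(zI+L_G)\) does not involve \(x_e\), and
\[
\frac{\partial}{\partial x_e}\det(zI+L_F)=\det(zI+L_G)\,\frac{\partial}{\partial x_e}\det(zI+Q_F).
\]
Each contraction \(F'=F/(p\to f_e(p))\) is a finite-fiber system over the same base graph \(G\), so the same factorization applies to it with the same base factor \(\det(zI+L_G)\). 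That factor is monic in \(z\), hence a nonzero element of the domain \(\Q[\mathbf x,z]\), and may be cancelled. It therefore suffices to prove the unquotiented identity
\[
\frac{\partial}{\partial x_e}\det(zI+L_F)=\sum_{\substack{p\in S_u\\ f_e(p)\ne p}}\det\bigl(zI+L_{F/(p\to f_e(p))}\bigr).
\]

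To prove it, I work in the coordinate basis of \(\mathcal H_F\). The variable \(x_e\) appears only in the rows indexed by \(p\in S_u\), and only through the lifted edge \((e,p):p\to q\) with \(q=f_e(p)\). If \(q=p\), the contribution cancels and the row does not involve \(x_e\). If \(q\ne p\), the row contains \(x_e\) linearly, with coefficient the row vector \(\epsilon_p-\epsilon_q\) (entry \(+1\) at \(p\), \(-1\) at \(q\)). Since the determinant is multilinear in the rows and each row is affine in \(x_e\), the derivative is a sum over such \(p\). The \(p\)-th term is the determinant of \(zI+L_F\) with row \(p\) replaced by \(\epsilon_p-\epsilon_q\).

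For a fixed \(p\), I add column \(p\) to column \(q\). This leaves the determinant unchanged and turns row \(p\) into \(\epsilon_p\). Expanding along that row gives the principal minor obtained by deleting row and column \(p\) of the column-modified matrix. The key step is to check that this minor is exactly \(zI+L_{F/(p\to q)}\) in the coordinate basis of the contracted system. Deleting row \(p\) removes all edges with source \(p\), together with the diagonal \(z\) in position \((p,p)\). For a row \(s\ne p\), the entry \(-x_a\) in column \(p\), coming from an edge \(s\to p\), moves into column \(q\); this is precisely redirection of that edge to \(q\). If \(s=q\), the moved entry lands on the diagonal and cancels against the matching \(+x_a\), matching the cancellation convention for a loop created at \(q\). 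The diagonal entries of row \(s\) are unchanged, since the outdegree weights of \(s\) are preserved by redirection.

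The main obstacle is this bookkeeping, so I would verify it entry by entry against the explicit description \(f'_a=\rho_w\circ f_a|_{S'_v}\). The cases needing care are edges into \(p\) from \(q\) itself, and parallel or loop edges at \(p\), which all disappear with row \(p\). Once the unquotiented identity holds, substituting the factorization for \(F\) and for each \(F'\) and cancelling \(\det(zI+L_G)\) yields~\eqref{eq:loop-derivative}.
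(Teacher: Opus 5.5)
Your proposal is correct and follows essentially the same route as the paper: differentiate $\det(zI+L_F)$ row by row, add column $p$ to column $q$, expand along row $p$ to identify the minor with $\det(zI+L_{F/(p\to q)})$, and then cancel the common monic, $x_e$-free factor $\det(zI+L_G)$ using the factorization~\eqref{eq:factorization}. The only difference is order of presentation: you reduce to the unquotiented identity first, whereas the paper proves that identity first and cancels afterwards.
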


\begin{proof}
Let \(M\) be the matrix of \(zI+L_F\) in the coordinate basis of
\(\mathcal H_F\). By multilinearity in the rows,
\(\partial_{x_e}\det M\) is the sum of determinants obtained by
differentiating one row of \(M\) at a time. Only rows indexed by
\(p\in S_u\) with \(q=f_e(p)\ne p\) contribute  to \(\partial_{x_e}\det M\). In the corresponding
summand, row \(p\) has \(1\) in column \(p\), \(-1\) in column \(q\),
and zeros elsewhere.

Add column \(p\) to column \(q\), and expand along row \(p\).
This gives the principal minor obtained by deleting row and column
\(p\). The remaining matrix represents \(zI+L_{F/(p\to q)}\) in the
coordinate basis indexed by \(V(F)\setminus\{p\}\): deleting row \(p\) removes all outgoing edges from \(p\), while adding the
two columns redirects every remaining edge ending at \(p\) to \(q\).
The scalar part remains \(zI\), since its only entry in column \(p\)
lies in the deleted row. Hence
\begin{equation}\label{eq:full-loop}
\frac{\partial}{\partial x_e}\det(zI+L_F)
=
\sum_{\substack{p\in S_u\\f_e(p)\ne p}}
\det\bigl(zI+L_{F/(p\to f_e(p))}\bigr).
\end{equation}

For every system \(H\) over \(G\), the Laplacian
factorization~\eqref{eq:factorization} gives
\[
\det(zI+L_H)=\det(zI+L_G)\det(zI+Q_H).
\]
Apply this to \(F\) and each contracted system. Since contraction
preserves \(G\) and the base loop contributes nothing to \(L_G\),
the factor \(\det(zI+L_G)\) is common to both sides and independent
of \(x_e\). It is monic in \(z\), hence nonzero, so cancelling it
proves the claimed identity.
\end{proof}
\begin{example}[The loop derivative for the running example]
\label{ex:running-loop-derivative}
\pushQED{\qed}
Return to the three-sheet cover of Example~\ref{ex:three-sheet-cover}
and choose the base loop \(y\).
The two choices of \(p\) with \(f_y(p)\ne p\) are \(p=1,2\), giving
the contractions
\[
 F_{12}=F/(1\to2),\qquad F_{21}=F/(2\to1)
\]
of Definition~\ref{def:directed-contraction}.
The fixed point \(3\) gives no term in Lemma~\ref{lem:loop}.

To form \(F_{12}\), delete sheet \(1\) and redirect every remaining edge
ending at \(1\) to \(2\). The fiber is \(\{2,3\}\), and its
\(x\)-map exchanges \(2\) and \(3\).
To form \(F_{21}\), delete sheet \(2\) and redirect every remaining edge
ending at \(2\) to \(1\). The fiber is \(\{1,3\}\), and its
\(x\)-map is constant with value \(1\).
Both \(y\)-maps are the identity. The base graph and all edge weights
remain unchanged, as shown in Figure~\ref{fig:y-contractions}.

In each quotient, take the images of the coordinate vectors other
than that of sheet \(3\). In the orders \((1,2)\), \((2)\), and
\((1)\), respectively, the quotient Laplacians have matrices
\[
 [Q_F]=
 \begin{bmatrix}
  2x+y & -x-y\\
  x-y & x+y
 \end{bmatrix},
 \qquad
 [Q_{F_{12}}]=\begin{bmatrix}2x\end{bmatrix},
 \qquad
 [Q_{F_{21}}]=\begin{bmatrix}x\end{bmatrix}.
\]
Thus
\[
 \det(zI+Q_F)
 =\det\begin{bmatrix}
  z+2x+y & -x-y\\
  x-y & z+x+y
 \end{bmatrix}
 =z^2+(3x+2y)z+3x^2+3xy.
\]
Lemma~\ref{lem:loop} reads
\[
 \begin{aligned}
 \frac{\partial}{\partial y}\det(zI+Q_F)
 &=2z+3x=(z+2x)+(z+x)\\
 &=\det(zI+Q_{F_{12}})+\det(zI+Q_{F_{21}}).
 \end{aligned}
\]
\popQED
\end{example}

\begin{figure}[htbp]
\centering
\scalebox{1.2}{%
\begin{tikzpicture}[
 ff/vertex/.append style={minimum size=4mm,inner sep=.3pt,font=\scriptsize},
 ff/yedge/.append style={solid}
]
  % Full three-sheet lift.
  \begin{scope}[xshift=3.7cm]
    \node[ff/vertex] (c1) at (0,.9) {$1$};
    \node[ff/vertex] (c2) at (-.8,-.7) {$2$};
    \node[ff/vertex] (c3) at (.8,-.7) {$3$};

    \draw[ff/xedge] (c1) --
      node[ff/label,midway,right=2pt] {$x$} (c2);
    \draw[ff/xedge] (c2) --
      node[ff/label,midway,below=2pt] {$x$} (c3);
    \draw[ff/xedge] (c3) --
      node[ff/label,midway,right=2pt] {$x$} (c1);

    \draw[ff/yedge] (c1) to[bend right=40]
  node[
    ff/label,
    fill=none,
    midway,
    right=2pt
  ] {$y$} (c2);

    \draw[ff/yedge] (c2) to[bend left=75]
      node[
        ff/label,
        fill=none,
        midway,
        left=3pt
      ] {$y$} (c1);
    \draw[ff/yedge]
      (c3) edge[loop right,min distance=8mm]
      node[
        ff/label,
        fill=none,
        midway,
        right=3pt
      ] {$y$} (c3);

    \node[ff/panel] at (0,-1.55) {Three-sheet cover \(F\)};
  \end{scope}

  % Two contractions of the y-edges; both base loops remain.
  \draw[ff/operation] (5.65,.25) -- (8.15,1.2);
  \node[ff/label,font=\footnotesize,fill=none] at (6.9,1.55)
    {Contract \(1\to2\)};
  \draw[ff/operation] (5.65,-.55) -- (8.15,-1.55);
  \node[ff/label,font=\footnotesize,fill=none] at (6.9,-1.85)
    {Contract \(2\to1\)};

  \node[ff/vertex] (a2) at (9.4,1.2) {$2$};
  \node[ff/vertex] (a3) at (11.2,1.2) {$3$};
  \draw[ff/xedge] (a2) to[bend left=22]
    node[ff/label,midway,above=2pt] {$x$} (a3);
  \draw[ff/xedge] (a3) to[bend left=22]
    node[ff/label,midway,below=2pt] {$x$} (a2);
  \draw[ff/yedge]
    (a2) edge[loop left,min distance=8mm]
    node[ff/label,fill=none,left=3pt] {$y$} (a2);
  \draw[ff/yedge]
    (a3) edge[loop right,min distance=8mm]
    node[ff/label,fill=none,right=3pt] {$y$} (a3);
  \node[ff/panel] at (10.3,.25) {\(F_{12}=F/(1\to2)\)};

  \node[ff/vertex] (b1) at (9.4,-1.55) {$1$};
  \node[ff/vertex] (b3) at (11.2,-1.55) {$3$};
  \draw[ff/xedge]
    (b1) edge[loop above,min distance=8mm]
    node[ff/label,fill=none,above=3pt] {$x$} (b1);
  \draw[ff/yedge]
    (b1) edge[loop left,min distance=8mm]
    node[ff/label,fill=none,left=3pt] {$y$} (b1);
  \draw[ff/xedge] (b3) --
    node[ff/label,midway,below=3pt] {$x$} (b1);
  \draw[ff/yedge]
    (b3) edge[loop right,min distance=8mm]
    node[ff/label,fill=none,right=3pt] {$y$} (b3);
  \node[ff/panel] at (10.3,-2.5) {\(F_{21}=F/(2\to1)\)};
\end{tikzpicture}
}
\caption{The running example and the two contractions of its \(y\)-edges.
The base graph is unchanged. Blue edges have weight \(x\), and orange
edges have weight \(y\); in both contracted systems, the \(y\)-edges are loops.}
\label{fig:y-contractions}
\end{figure}

\begin{corollary}[Loop coefficient recurrence]\label{cor:loop}
Let \(F\) be balanced and \(e:u\to u\) a base loop. For each
\(p\in S_u\) with \(f_e(p)\ne p\), let \(F_{e,p}\) be the
system over \(G\setminus\{e\}\) obtained from
\(F/(p\to f_e(p))\) by deleting the base edge \(e\) and all remaining
edges indexed by \(e\). Then
\begin{equation}\label{eq:loop-recurrence}
 c(F)=\sum_{\substack{p\in S_u\\f_e(p)\ne p}}c(F_{e,p}).
\end{equation}
Each \(F_{e,p}\) is balanced and satisfies \(D(F_{e,p})=D(F)-1\). 
\end{corollary}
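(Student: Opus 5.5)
The plan is to take the coefficient of \(\prod_{a\ne e}x_a\) in both sides of the loop derivative identity~\eqref{eq:loop-derivative}, after setting \(z=0\). Differentiating with respect to \(x_e\) and then extracting \(\prod_{a\ne e}x_a\) is the same as extracting \(\prod_{a\in E(G)}x_a\) from \(P_F\), because \(P_F\) has degree at most one in \(x_e\) on that squarefree monomial. Hence
\[
 c(F)=\Bigl[\prod_{a\in E(G)}x_a\Bigr]P_F
 =\Bigl[\prod_{a\ne e}x_a\Bigr]\partial_{x_e}P_F
 =\sum_{\substack{p\in S_u\\f_e(p)\ne p}}
 \Bigl[\prod_{a\ne e}x_a\Bigr]P_{F/(p\to f_e(p))},
\]
using \(\det(zI+Q_H)|_{z=0}=P_H\).

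Next, identify each summand with \(c(F_{e,p})\). The monomial \(\prod_{a\ne e}x_a\) does not involve \(x_e\), so we may set \(x_e=0\) in \(P_{F/(p\to f_e(p))}\) before extracting it. By the block formula~\eqref{eq:blocks}, each base edge contributes linearly to \(Q\). Setting \(x_e=0\) therefore yields exactly the quotient Laplacian of the system with all edges indexed by \(e\) deleted. The fibers, the other fiber maps and the quotient spaces \(U_v\) are unchanged, so this system is \(F_{e,p}\) over \(G\setminus\{e\}\). Thus the summand equals \(\bigl[\prod_{a\in E(G\setminus\{e\})}x_a\bigr]P_{F_{e,p}}=c(F_{e,p})\), which proves~\eqref{eq:loop-recurrence}.

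Finally, check balance and rank. In \(F_{e,p}\) the fiber over \(u\) has size \(|S_u|-1\), and all other fibers are unchanged. The outdegree of \(u\) in \(G\setminus\{e\}\) is \(\outdeg_G(u)-1=d_u-1\), while the other outdegrees are unchanged. So \(F_{e,p}\) is balanced and \(D(F_{e,p})=D(F)-1\).

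The one point needing care is whether deleting \(e\) is legitimate as a substitution \(x_e=0\) at the level of \(Q\) rather than \(L\). This follows from~\eqref{eq:blocks}, since the \(e\)-terms \(x_e(\operatorname{id}-T_e)\) in the \(U_u\) block vanish identically at \(x_e=0\). No other step is an obstacle; the main content is already contained in Lemma~\ref{lem:loop}.
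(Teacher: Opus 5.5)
Your proposal is correct and follows the paper's proof essentially step for step: set \(z=0\) in Lemma~\ref{lem:loop}, observe that extracting \(\prod_{a\ne e}x_a\) from \(\partial_{x_e}P_F\) recovers \(c(F)\), identify \(P_{F/(p\to f_e(p))}|_{x_e=0}\) with \(P_{F_{e,p}}\) via the linearity of the blocks~\eqref{eq:blocks}, and verify balance and \(D(F_{e,p})=D(F)-1\) by counting. One wording fix: the first extraction works because the target monomial has \(x_e\)-exponent exactly one, so only the \(x_e^1\) terms of \(P_F\) survive, each with multiplier \(1\); it is not because \(P_F\) has degree at most one in \(x_e\).
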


\begin{proof}
Write
\[
M_e=\prod_{a\in E(G)\setminus\{e\}}x_a.
\]
Setting \(z=0\) in the loop-derivative identity~\eqref{eq:loop-derivative} gives
\[
\frac{\partial P_F}{\partial x_e}
=
\sum_{\substack{p\in S_u\\f_e(p)\ne p}}
P_{F/(p\to f_e(p))}.
\]
Since differentiation followed by evaluation at \(x_e=0\) extracts
the terms having exponent exactly one in \(x_e\), we obtain
\[
\begin{aligned}
c(F)
&=[x_eM_e]P_F\\
&=[M_e]\left(
\left.\frac{\partial P_F}{\partial x_e}\right|_{x_e=0}
\right)\\
&=\sum_{\substack{p\in S_u\\f_e(p)\ne p}}
[M_e]\left(
\left.P_{F/(p\to f_e(p))}\right|_{x_e=0}
\right)\\
&=\sum_{\substack{p\in S_u\\f_e(p)\ne p}}c(F_{e,p}).
\end{aligned}
\]
The last equality holds because setting \(x_e=0\) removes the
contributions of all remaining edges indexed by \(e\),
so
\[
\left.P_{F/(p\to f_e(p))}\right|_{x_e=0}=P_{F_{e,p}}.
\]

In \(F_{e,p}\), the fiber over \(u\) is
\(S'_u=S_u\setminus\{p\}\), and the base graph is
\(G'=G\setminus\{e\}\). Thus
\[
|S'_u|=|S_u|-1,
\qquad
\operatorname{outdeg}_{G'}(u)
=\operatorname{outdeg}_G(u)-1
=|S'_u|-1.
\]
All other fiber sizes and outdegrees are unchanged, so \(F_{e,p}\)
again satisfies the balance condition \(\outdeg_{G'}(v)=|S'_v|-1\).
Exactly one fiber loses one element, giving
\(D(F_{e,p})=D(F)-1\).
\end{proof}

\begin{example}[The loop coefficient recurrence]
\label{ex:loop-contraction}
We illustrate Corollary~\ref{cor:loop} with the three-sheet cover
of Example~\ref{ex:three-sheet-cover}, where \(f_x=(123)\) and
\(f_y=(12)\). This system is balanced,
and
\[
c(F)=[xy](3x^2+3xy)=3.
\]
The two nontrivial lifts of the base loop \(y\) are \(1\to2\) and
\(2\to1\); the corresponding terms in the recurrence are shown
in Figure~\ref{fig:loop-contraction}.

Contracting \(1\to2\) and then deleting \(y\) leaves the fiber
\(\{2,3\}\). The remaining map, belonging to \(x\), is
\[
2\longmapsto3,\qquad 3\longmapsto2.
\]
The induced pullback on the quotient of rank one is
\(-\operatorname{id}\), so
\[
P_{F_{y,1}}=2x,\qquad c(F_{y,1})=2.
\]

Contracting \(2\to1\) and then deleting \(y\) instead leaves
the fiber \(\{1,3\}\), with remaining map
\[
1\longmapsto1,\qquad 3\longmapsto1.
\]
The induced pullback on the quotient is zero, so
\[
P_{F_{y,2}}=x,\qquad c(F_{y,2})=1.
\]
Thus the loop recurrence reads
\[
c(F)=c(F_{y,1})+c(F_{y,2})=2+1=3.
\]
\end{example}

\begin{figure}[htbp]
\centering
\scalebox{1.2}{%
\begin{tikzpicture}[
  ff/vertex/.append style={
    minimum size=4mm,
    inner sep=.3pt,
    font=\scriptsize
  },
  ff/yedge/.append style={solid},
  ff/label/.append style={fill=none},
  ff/operation/.append style={
    draw=black!60,
    line width=.45pt,
    -{Stealth[length=1.7mm,width=1.1mm]}
  }
]
  % Original lifted graph: larger spacing between vertices.
  \node[ff/vertex] (c1) at (0,1.25) {$1$};
  \node[ff/vertex] (c2) at (-1.25,-1.1) {$2$};
  \node[ff/vertex] (c3) at (1.25,-1.1) {$3$};

  \draw[ff/xedge] (c1) --
    node[ff/label,midway,right=3pt] {$x$} (c2);
  \draw[ff/xedge] (c2) --
    node[ff/label,midway,below=3pt] {$x$} (c3);
  \draw[ff/xedge] (c3) --
    node[ff/label,midway,right=3pt] {$x$} (c1);

  \draw[ff/yedge] (c1) to[bend right=40]
    node[ff/label,midway,right=3pt] {$y$} (c2);
  \draw[ff/yedge] (c2) to[bend left=75]
    node[ff/label,midway,left=3pt] {$y$} (c1);
  \draw[ff/yedge]
    (c3) edge[loop right,min distance=11mm]
    node[ff/label,midway,right=3pt] {$y$} (c3);

  \node[ff/panel] at (0,2.1)
    {\(P_F=3x^2+3xy\)};

  % Short reduction arrows with horizontal labels.
  \draw[ff/operation] (-1.5,-1.85) --
    node[ff/label,midway,above left=4pt,align=center]
    {Contract \(1\to2\)\\and delete \(y\)}
    (-2.65,-3.20);

  \draw[ff/operation] (1.5,-1.85) --
    node[ff/label,midway,above right=4pt,align=center]
    {Contract \(2\to1\)\\and delete \(y\)}
    (2.65,-3.20);

  % First contracted system: a transposition.
  \node[ff/vertex] (l2) at (-5.4,-3.85) {$2$};
  \node[ff/vertex] (l3) at (-2.8,-3.85) {$3$};

  \draw[ff/xedge] (l2) to[bend left=18]
    node[ff/label,midway,above=3pt] {$x$} (l3);
  \draw[ff/xedge] (l3) to[bend left=18]
    node[ff/label,midway,below=3pt] {$x$} (l2);

  \node[ff/panel] at (-4.1,-4.9)
    {\(P_{F_{y,1}}=2x\)};

  % Second contracted system: a constant map.
  \node[ff/vertex] (r1) at (2.8,-3.85) {$1$};
  \node[ff/vertex] (r3) at (5.4,-3.85) {$3$};

  \draw[ff/xedge] (r3) --
    node[ff/label,midway,above=3pt] {$x$} (r1);
  \draw[ff/xedge]
    (r1) edge[loop left,min distance=11mm]
    node[ff/label,midway,left=3pt] {$x$} (r1);

  \node[ff/panel] at (4.1,-4.9)
    {\(P_{F_{y,2}}=x\)};
\end{tikzpicture}
}
\caption{The two terms in the loop coefficient recurrence for the
running example. Contracting \(1\to2\) or \(2\to1\) and deleting
\(y\) gives \(F_{y,1}\) or \(F_{y,2}\), respectively.}
\label{fig:loop-contraction}
\end{figure}

\begin{remark}
For the base loop \(e\) in Corollary~\ref{cor:loop}, if
\(f_e=\mathrm{id}_{S_u}\), the recurrence has an empty sum and
\(c(F)=0\).
Contraction may destroy bijectivity and produce unequal fiber sizes,
but it preserves the class of finite-fiber systems. Contraction alone
decreases the fiber size over \(u\) without changing the base outdegree;
deleting the base loop \(e\) restores balance.
Thus the systems \(F_{e,p}\) in the recurrence satisfy the hypotheses
needed for induction, even when they are no longer graph covers.
\end{remark}
\section{Elimination in a loopless base}\label{sec:elimination}

\begin{lemma}\label{lem:choice}
Let \(G\) be a directed graph with at least one edge.
Then there exists a vertex \(u\) such that
\[
\operatorname{outdeg}_G(u)>0,
\qquad
\operatorname{indeg}_G(u)\le\operatorname{outdeg}_G(u).
\]
\end{lemma}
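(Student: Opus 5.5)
The argument is a double-counting of edge endpoints followed by a short averaging step. Each edge of \(G\), whether a loop, a parallel edge, or an ordinary edge, contributes exactly one to the outdegree of its source and exactly one to the indegree of its target. Summing over vertices therefore gives
\[
 \sum_{v\in V(G)}\indeg_G(v)=|E(G)|=\sum_{v\in V(G)}\outdeg_G(v).
\]

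Next, restrict attention to the set \(W=\{v:\outdeg_G(v)>0\}\). Since \(G\) has at least one edge, its source lies in \(W\), so \(W\) is nonempty. Every edge has its source in \(W\), which gives
\[
 \sum_{v\in W}\outdeg_G(v)=|E(G)|.
\]
Restricting the indegree sum to \(W\) can only decrease it, so
\[
 \sum_{v\in W}\indeg_G(v)\le|E(G)|=\sum_{v\in W}\outdeg_G(v).
\]

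If every \(v\in W\) satisfied \(\indeg_G(v)>\outdeg_G(v)\), summing this strict inequality over the nonempty set \(W\) would contradict the displayed inequality. Hence some \(u\in W\) satisfies \(\indeg_G(u)\le\outdeg_G(u)\), and by the definition of \(W\) it also satisfies \(\outdeg_G(u)>0\).

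None of these steps is a real obstacle. The one point needing care is that the averaging must be carried out over \(W\) and not over all of \(V(G)\). Averaging over all vertices could return a vertex with \(\outdeg_G(u)=0\), which is useless for the elimination step. Restricting to \(W\) costs nothing, because it keeps the full outdegree sum while only lowering the indegree sum.
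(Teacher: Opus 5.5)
Your proof is correct and is essentially the paper's argument. The paper also restricts to the set \(V_+\) of vertices with positive outdegree, uses \(\sum_{v\in V_+}\indeg_G(v)\le|E(G)|=\sum_{v\in V_+}\outdeg_G(v)\), and concludes that some \(u\in V_+\) has \(\indeg_G(u)\le\outdeg_G(u)\).
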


\begin{proof}
Let
\[
V_+=\{v\in V(G):\operatorname{outdeg}_G(v)>0\}.
\]
Since \(G\) has at least one edge, \(V_+\) is nonempty. Moreover,
\[
\sum_{v\in V_+}\operatorname{indeg}_G(v)
\le
\sum_{v\in V(G)}\operatorname{indeg}_G(v)
=
|E(G)|
=
\sum_{v\in V_+}\operatorname{outdeg}_G(v).
\]
Therefore some \(u\in V_+\) satisfies
\(\operatorname{indeg}_G(u)\le\operatorname{outdeg}_G(u)\).
By the definition of \(V_+\), this vertex also has positive outdegree.
\end{proof}

For the rest of this section, let \(F\) be balanced and suppose that
its base graph \(G\) is loopless and has at least one edge.
Choose \(u\) as in Lemma~\ref{lem:choice}, and put
\[
j=\operatorname{indeg}_G(u),
\qquad
d=\operatorname{outdeg}_G(u)=d_u,
\]
so that \(0\le j\le d\) and \(d>0\). Label its incoming and outgoing edges as
\[
 a_i:v_i\to u\quad(1\le i\le j),\qquad
 b_t:u\to w_t\quad(1\le t\le d),
\]
using the same letters for their independent weights. Since \(u\) has no loop, no edge is both incoming and outgoing
at \(u\). Hence the incoming and outgoing edge sets are disjoint,
although their other endpoints \(v_i\) and \(w_t\) may coincide. 
Write
\[
\mathbf{a}=(a_1,\ldots,a_j),\qquad
\mathbf{b}=(b_1,\ldots,b_d),\qquad
s=\sum_{t=1}^d b_t,
\]
where \(\mathbf{a}\) and \(\mathbf{b}\) are the vectors of incoming
and outgoing edge weights at \(u\), respectively.
Let \(E_0\) consist of the edges not incident to \(u\), and write
\(
\mathbf{x}_0=(x_e)_{e\in E_0}.
\)

We construct an auxiliary system \(\widehat F\) as follows.
First form its base graph \(\widehat G\) by deleting \(u\) and
all its incident edges from \(G\). Retain the edges in \(E_0\)
with their original weights. For each \(1\le i\le j\) and
\(1\le t\le d\), add a virtual edge
\(e_{it}:v_i\to w_t\) with independent weight \(y_{it}\),
representing the path
\[
v_i\xrightarrow{a_i}u\xrightarrow{b_t}w_t.
\]
Distinct pairs \((i,t)\) give distinct edges, even when their
endpoints coincide.

Over each vertex \(v\ne u\), retain the fiber \(S_v\).
Keep the original fiber maps on the edges in \(E_0\), and give
the virtual edge \(e_{it}\) the map
\[
f_{e_{it}}=f_{b_t}\circ f_{a_i}:
S_{v_i}\longrightarrow S_{w_t}.
\]
Write
\[
\mathbf{y}=(y_{it})_{1\le i\le j,\;1\le t\le d}.
\]

The following lemma relates the quotient-Laplacian determinants
of \(F\) and \(\widehat F\).

\begin{lemma}[Schur identity]\label{lem:schur}
With the notation above,
\begin{equation}\label{eq:schur}
P_F(\mathbf{a},\mathbf{b},\mathbf{x}_0)
=
s^d\left.
P_{\widehat F}(\mathbf{y},\mathbf{x}_0)
\right|_{\substack{y_{it}=a_i b_t/s\\
1\le i\le j,\;1\le t\le d}}.
\end{equation}
The evaluation on the right replaces each virtual-edge weight
\(y_{it}\) by \(a_i b_t/s\), leaving \(\mathbf{x}_0\) unchanged.
\end{lemma}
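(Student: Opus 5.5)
This is a block Schur complement computation on \(Q_F\), eliminating the summand \(U_u\). We write
\[
\mathcal H_F/\mathcal C_F=U_u\oplus W,\qquad W=\bigoplus_{v\ne u}U_v .
\]
By the block formula~\eqref{eq:blocks} and looplessness at \(u\), the diagonal block at \(u\) is \(Q_F[u,u]=s\,\operatorname{id}_{U_u}\), since no edge \(u\to u\) contributes a pullback term. The remaining blocks are:
\begin{itemize}
\item the off-diagonal blocks out of \(u\), namely \(Q_F[u,w]=-\sum_{t:w_t=w}b_tT_{b_t}\);
\item the blocks into \(u\), namely \(Q_F[v,u]=-\sum_{i:v_i=v}a_iT_{a_i}\);
\item the block \(Q_F[W,W]\), which is given by the same formula but restricted to edges in \(E_0\) plus the diagonal contributions \(\sum_{i:v_i=v}a_i\) coming from the edges \(a_i\) leaving \(v\).
\end{itemize}
Since \(s\) is a nonzero element of the fraction field and \(d=\operatorname{rank}U_u\), the Schur complement formula gives
\[
\det Q_F=s^{d}\det\Bigl(Q_F[W,W]-\tfrac1s\,Q_F[W,u]\,Q_F[u,W]\Bigr).
\]
This identity holds over \(\Q(\mathbf x)\), which suffices because both sides are then polynomials that agree as rational functions.

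The key step is to identify the Schur complement with \(Q_{\widehat F}\) evaluated at \(y_{it}=a_ib_t/s\). The correction term has \((v,w)\) block
\[
\frac1s\sum_{i:v_i=v}\ \sum_{t:w_t=w}a_ib_t\,T_{a_i}T_{b_t}=\sum_{i,t}\frac{a_ib_t}{s}\,T_{e_{it}},
\]
using the composition rule~\eqref{eq:composition}: \(T_{a_i}T_{b_t}=T_{a_ib_t}\) is the pullback along \(f_{b_t}\circ f_{a_i}=f_{e_{it}}\). This is exactly the off-diagonal contribution \(-\sum y_{it}T_{e_{it}}\) of the virtual edges in \(Q_{\widehat F}\) after substitution. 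The virtual-edge pullbacks only ever appear with this sign, so the match is consistent.

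For the diagonal, the virtual edges with source \(v_i\) contribute \(\sum_t y_{it}=a_i\sum_t b_t/s=a_i\) after substitution. This reproduces the diagonal term \(a_i\,\operatorname{id}\) that the edge \(a_i\) contributes to \(Q_F[v_i,v_i]\). Edges in \(E_0\) appear identically in both matrices, so the Schur complement equals \(Q_{\widehat F}\) evaluated at \(y_{it}=a_ib_t/s\), and taking determinants yields~\eqref{eq:schur}.

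The main obstacle is bookkeeping rather than conceptual. One must check that the diagonal terms match exactly, which is the point where \(\sum_t b_t=s\) is used. One must also handle coinciding endpoints (\(v_i=w_t\), parallel edges) correctly. The composition convention requires care as well: \(Q_F[v,u]Q_F[u,w]\) multiplies \(T_{a_i}\) on the left, matching \(T_{a_ib_t}\). The case \(j=0\) is degenerate but fine. Then \(Q_F[W,u]=0\), so the matrix is block triangular and the formula reduces to \(P_F=s^dP_{\widehat F}\) with no virtual edges.
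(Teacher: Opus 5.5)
Your proposal is correct and follows the paper's proof step for step. It takes the Schur complement of the block \(Q_F[u,u]=s\operatorname{id}_{U_u}\), identifies the correction term through the composition rule \(T_{a_i}T_{b_t}=T_{a_ib_t}\), and matches the diagonal via \(\sum_t a_ib_t/s=a_i\); the only differences, working over the fraction field instead of \(\Q[\mathbf a,\mathbf b,\mathbf x_0,s^{-1}]\) and treating \(j=0\) explicitly, are cosmetic.
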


\begin{proof}
Put \(W=\bigoplus_{v\ne u}U_v\). Since there is no loop at \(u\),
the block formula~\eqref{eq:blocks} gives
\(Q_F[u,u]=s\operatorname{id}_{U_u}\). Thus, with respect to
\(U_u\oplus W\),
\[
Q_F=
\begin{bmatrix}
s\operatorname{id}_{U_u} & B\\
C & A
\end{bmatrix},
\]
where \(B:W\to U_u\), \(C:U_u\to W\), and \(A:W\to W\).
Work over \(R=\Q[\mathbf a,\mathbf b,\mathbf x_0,s^{-1}]\),
so that the upper-left block is invertible. Block elimination gives
\[
\begin{bmatrix}
\operatorname{id}_{U_u} & 0\\
-s^{-1}C & \operatorname{id}_W
\end{bmatrix}
\begin{bmatrix}
s\operatorname{id}_{U_u} & B\\
C & A
\end{bmatrix}
=
\begin{bmatrix}
s\operatorname{id}_{U_u} & B\\
0 & A-s^{-1}CB
\end{bmatrix}.
\]
The first factor on the left has determinant \(1\).
Since \(\operatorname{rank}U_u=d\), taking determinants yields
the Schur complement formula
\[
P_F=s^d\det M,
\qquad M=A-s^{-1}CB.
\]
It remains to identify \(M\) with the quotient Laplacian of
\(\widehat F\) after substituting \(y_{it}=a_i b_t/s\).

For vertices \(v,w\ne u\), the blocks connecting them to \(u\) are
\[
Q_F[v,u]=-\sum_{i:\,v_i=v}a_iT_{a_i},
\qquad
Q_F[u,w]=-\sum_{t:\,w_t=w}b_tT_{b_t}.
\]
Consequently,
\[
\begin{aligned}
M[v,w]
&=Q_F[v,w]-\frac{1}{s}Q_F[v,u]Q_F[u,w]\\
&=Q_F[v,w]
-\sum_{\substack{i:\,v_i=v\\t:\,w_t=w}}
\frac{a_i b_t}{s}\,T_{a_i}T_{b_t}.
\end{aligned}
\]
The pullback composition rule \(T_{ab}=T_aT_b\) in~\eqref{eq:composition}
identifies \(T_{a_i}T_{b_t}\) as the pullback of the virtual edge representing the
path \(a_i b_t\), whose fiber map is
\(f_{b_t}\circ f_{a_i}\). Thus the last sum gives exactly the
negative pullback terms for the virtual edges with weights
\(y_{it}=a_i b_t/s\).

The weighted outdegree terms also agree.
In \(F\), the edge \(a_i:v_i\to u\) contributes
\(a_i\operatorname{id}_{U_{v_i}}\) to the diagonal block at \(v_i\).
In \(\widehat F\), it is replaced by the \(d\) virtual edges
representing \(a_i b_1,\ldots,a_i b_d\). Their weights sum to
\[
\sum_{t=1}^d\frac{a_i b_t}{s}
=\frac{a_i}{s}\sum_{t=1}^d b_t
=a_i,
\]
so they give the same weighted outdegree contribution.
All edges in \(E_0\) retain their original contributions.
If \(v_i=w_t\), the virtual edge is a loop, and both its
weighted outdegree term and its negative pullback term occur
in the same diagonal block, as required by the quotient-Laplacian
block formula~\eqref{eq:blocks}.

Therefore
\[
M=
\left.
Q_{\widehat F}(\mathbf{y},\mathbf{x}_0)
\right|_{\substack{y_{it}=a_i b_t/s\\
1\le i\le j,\;1\le t\le d}}.
\]
Taking determinants and using \(P_F=s^d\det M\) proves the
Schur-complement determinant identity~\eqref{eq:schur}.
\end{proof}

Write \([k]=\{1,\ldots,k\}\), with \([0]=\varnothing\).
For an injection \(\phi:[j]\hookrightarrow[d]\), form \(G_\phi\)
from \(\widehat G\) by deleting the virtual edges \(e_{it}\)
with \(t\ne\phi(i)\), retaining all edges in \(E_0\).
Define \(F_\phi\) over \(G_\phi\) by retaining the fibers and
fiber maps of \(\widehat F\). Thus its virtual edges are
\[
e_{i,\phi(i)}:v_i\longrightarrow w_{\phi(i)},
\qquad i\in[j],
\]
with independent weights \(y_{i,\phi(i)}\) and maps
\(f_{b_{\phi(i)}}\circ f_{a_i}\).

\begin{example}[The constructions \(\widehat F\) and \(F_\phi\)]
\label{ex:elimination}
Consider a balanced system in which \(u\) has two incoming edges
and four outgoing edges,
\[
a_i:v_i\to u\quad(i=1,2),
\qquad
b_t:u\to w_t\quad(t=1,2,3,4).
\]
For this illustration, take the six neighboring vertices to be
distinct. Thus \(j=2\), \(d=4\), and \(|S_u|=5\).
Edges not incident to \(u\) are omitted from
Figure~\ref{fig:elimination}.

The auxiliary system \(\widehat F\) contains all eight composed
edges
\[
v_i\longrightarrow w_t,
\qquad
\text{with map }f_{b_t}\circ f_{a_i}
\text{ and weight }y_{it}.
\]
For the injection \(\phi(1)=1,\phi(2)=2\), the system \(F_\phi\)
contains just the two composed edges
\[
v_1\xrightarrow{\,y_{11}\,}w_1,
\qquad
v_2\xrightarrow{\,y_{22}\,}w_2,
\]
in addition to the unchanged edges in \(E_0\).
All fibers except \(S_u\) are retained.

In \(\widehat F\), each incoming edge is replaced by four edges,
so balance is lost at \(v_1,v_2\). In \(F_\phi\), each is replaced
by exactly one edge, preserving balance.
\end{example}

\begin{figure}[htbp]
\centering
\scalebox{1.1}{%
\begin{tikzpicture}[
  ff/vertex/.append style={
    minimum size=4mm,
    inner sep=.3pt,
    font=\scriptsize
  },
  ff/yedge/.append style={solid},
  ff/label/.append style={fill=none}
]
  % Original local base graph.
  \begin{scope}
    \node[ff/vertex] (v1) at (0,.8) {$v_1$};
    \node[ff/vertex] (v2) at (0,-.8) {$v_2$};
    \node[ff/vertex] (u)  at (1.7,0) {$u$};

    \foreach \t/\h in {1/1.8,2/.6,3/-.6,4/-1.8}
      \node[ff/vertex] (w\t) at (3.4,\h) {$w_{\t}$};

    \draw[ff/xedge] (v1) --
      node[ff/label,midway,above=3pt] {$a_1$} (u);
    \draw[ff/yedge] (v2) --
      node[ff/label,midway,below=3pt] {$a_2$} (u);

    \draw[ff/xedge] (u) --
      node[ff/label,pos=.65,above=3pt] {$b_1$} (w1);
    \draw[ff/yedge] (u) --
      node[ff/label,pos=.65,above=3pt] {$b_2$} (w2);
    \draw[ff/edge] (u) --
      node[ff/label,pos=.65,below=3pt] {$b_3$} (w3);
    \draw[ff/edge] (u) --
      node[ff/label,pos=.65,below=3pt] {$b_4$} (w4);

    \node[ff/panel] at (1.7,2.6)
      {Original system \(F\)};
    \node[ff/panel] at (1.7,-2.5)
      {\(|S_u|=5\)};
  \end{scope}

  % All compositions in the auxiliary system.
  \begin{scope}[xshift=4.9cm]
    \node[ff/vertex] (hv1) at (0,.8) {$v_1$};
    \node[ff/vertex] (hv2) at (0,-.8) {$v_2$};

    \foreach \t/\h in {1/1.8,2/.6,3/-.6,4/-1.8}
      \node[ff/vertex] (hw\t) at (2.8,\h) {$w_{\t}$};

    \foreach \i in {1,2}
      \foreach \t in {1,2,3,4}
        \draw[ff/edge] (hv\i) -- (hw\t);

    \node[ff/panel] at (1.4,2.6)
      {Auxiliary system \(\widehat F\)\\
       All eight compositions};
  \end{scope}

  % A selected injection.
  \begin{scope}[xshift=9.5cm]
    \node[ff/vertex] (fv1) at (0,.8) {$v_1$};
    \node[ff/vertex] (fv2) at (0,-.8) {$v_2$};

    \foreach \t/\h in {1/1.8,2/.6,3/-.6,4/-1.8}
      \node[ff/vertex] (fw\t) at (2.8,\h) {$w_{\t}$};

    \draw[ff/xedge] (fv1) --
      node[ff/label,pos=.6,above=3pt] {$y_{11}$} (fw1);
    \draw[ff/yedge] (fv2) --
      node[ff/label,pos=.83,below=3pt] {$y_{22}$} (fw2);

    \node[ff/panel] at (1.4,2.6)
      {Selected system \(F_\phi\)\\
       Two compositions};
    \node[ff/panel] at (1.4,-2.5)
      {\(\phi(1)=1,\quad\phi(2)=2\)};
  \end{scope}
\end{tikzpicture}
}
\caption{The base graphs of \(F\), \(\widehat F\), and \(F_\phi\). Edges in \(E_0\) are omitted.}
\label{fig:elimination}
\end{figure}
\begin{proposition}[Elimination coefficient recurrence]\label{prop:elimination}
With \(F_\phi\) defined as above,
\begin{equation}\label{eq:elimination}
 c(F)=(d-j)!\sum_{\phi:[j]\hookrightarrow[d]}c(F_\phi).
\end{equation}
Every \(F_\phi\) is balanced and satisfies \(D(F_\phi)=D(F)-d<D(F)\).
\end{proposition}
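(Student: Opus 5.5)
The plan is to extract the squarefree coefficient \(c(F)\) directly from the Schur identity of Lemma~\ref{lem:schur}, by expanding \(P_{\widehat F}\) in monomials and tracking which of them can produce \(\prod_i a_i\prod_t b_t\prod_{e\in E_0}x_e\) after the substitution \(y_{it}=a_ib_t/s\). The substituted right-hand side of~\eqref{eq:schur} is a priori only a rational function, with possible negative powers of \(s\). To make coefficient extraction legitimate, I first grade by total degree in \(\mathbf a\). Write \(P_{\widehat F}=\sum_{\beta,\gamma}\kappa_{\beta,\gamma}\,\mathbf y^\beta\mathbf x_0^\gamma\). The monomial \(\mathbf y^\beta\mathbf x_0^\gamma\) becomes
\[
s^{d-|\beta|}\prod_i a_i^{r_i(\beta)}\prod_t b_t^{c_t(\beta)}\,\mathbf x_0^\gamma ,
\]
where \(r_i\) and \(c_t\) are the row and column sums of \(\beta\). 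Its \(\mathbf a\)-degree is \(|\beta|\), because \(s\) involves only \(\mathbf b\). Hence the \(\mathbf a\)-degree-\(m\) homogeneous component of the polynomial \(P_F\) equals \(s^{d-m}R_m\), where \(R_m\) collects the terms with \(|\beta|=m\). The target monomial has \(\mathbf a\)-degree exactly \(j\le d\), so only \(m=j\) matters, and then \(s^{d-j}R_j\) is a genuine polynomial. Extraction is therefore an honest computation in \(\Q[\mathbf a,\mathbf b,\mathbf x_0]\).

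Next I identify the contributing \(\beta\). The exponent of \(a_i\) must be \(1\), so every row sum of \(\beta\) is \(1\). Thus \(\beta\) selects a single \(t=\phi(i)\) for each \(i\), and the \(\mathbf b\)-part becomes \(s^{d-j}\prod_i b_{\phi(i)}\). The factor \(s^{d-j}\) has only nonnegative exponents, so every \(b_t\) appears to exponent at most \(1\) only if \(\phi\) is injective. For injective \(\phi\), the coefficient of \(\prod_{t\notin\operatorname{im}\phi}b_t\) in \(s^{d-j}=(b_1+\dots+b_d)^{d-j}\) is the multinomial value \((d-j)!\). Matching the \(\mathbf x_0\)-part forces \(\gamma=\mathbf 1\). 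This gives
\[
c(F)=(d-j)!\sum_{\phi:[j]\hookrightarrow[d]}\bigl[\textstyle\prod_i y_{i\phi(i)}\prod_{e\in E_0}x_e\bigr]P_{\widehat F}.
\]

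Then I identify each bracket with \(c(F_\phi)\). Every edge contributes linearly to the blocks~\eqref{eq:blocks}, so setting \(y_{it}=0\) for \(t\ne\phi(i)\) in \(P_{\widehat F}\) deletes those virtual edges and yields exactly \(P_{F_\phi}\). This specialization does not affect monomials avoiding those variables. The bracket is therefore the coefficient of the product of all edge variables of \(G_\phi\), which is \(c(F_\phi)\).

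Finally I check balance. Outdegrees at vertices other than the \(v_i\) are unchanged, and the fibers over them are retained. At each \(v_i\), the single edge \(a_i\) is replaced by the single virtual edge \(e_{i\phi(i)}\). If several \(a_i\) share a source, each is replaced one-for-one, so outdegrees are preserved. The fiber over \(u\), of rank \(d_u=d\), is removed, so \(D(F_\phi)=D(F)-d<D(F)\) since \(d>0\). I expect the main care to be needed in the grading argument that justifies coefficient extraction despite the \(s^{-1}\) in the substitution; the rest is bookkeeping.
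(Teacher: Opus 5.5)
Your proposal is correct and follows essentially the same route as the paper. You expand $P_{\widehat F}$ in the virtual weights, use the Schur substitution so that each $a_i$ selects exactly one $t=\phi(i)$, discard non-injective $\phi$, obtain the factor $(d-j)!$ from $s^{d-j}$, and identify each remaining coefficient with $c(F_\phi)$ by setting the unselected $y_{it}$ to zero; your grading by $\mathbf a$-degree is a slightly more explicit form of the paper's extraction of $[a_1\cdots a_j]$ over $\Q[\mathbf b,\mathbf x_0,s^{-1}]$, where $s^{d-j}$ is a genuine polynomial because $j\le d$.
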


\begin{proof}
Apply the Schur identity~\eqref{eq:schur} to compute the coefficient
of \(a_1\cdots a_jb_1\cdots b_d\prod_{e\in E_0}x_e\) in \(P_F\).
Expand the auxiliary determinant in its virtual-edge weights:
\begin{equation}\label{eq:virtual-expansion}
P_{\widehat F}(\mathbf y,\mathbf x_0)
=
\sum_{m\in\mathbb Z_{\ge0}^{j\times d}}
A_m(\mathbf x_0)
\prod_{i=1}^j\prod_{t=1}^d y_{it}^{m_{it}}.
\end{equation}
Under the Schur substitution, the monomial indexed by \(m\),
together with the factor \(s^d\), becomes
\[
s^d\prod_{i,t}\left(\frac{a_i b_t}{s}\right)^{m_{it}}
=s^{d-\sum_{i,t}m_{it}}
\prod_i a_i^{\sum_t m_{it}}
\prod_t b_t^{\sum_i m_{it}}.
\]
We first extract \(a_1\cdots a_j\), treating the remaining
variables as coefficients in
\(\Q[\mathbf b,\mathbf x_0,s^{-1}]\).
A term contributes exactly when
\[
\sum_t m_{it}=1\qquad(1\le i\le j).
\]
Such a monomial selects one virtual edge for each incoming label.
It is therefore indexed by a map \(\tau:[j]\to[d]\), with
\[
m(\tau)_{it}=\ind_{\{t=\tau(i)\}}.
\]
Its total degree in \(\mathbf y\) is \(j\), so
\begin{equation}\label{eq:incoming}
[a_1\cdots a_j]P_F
=s^{d-j}\sum_{\tau:[j]\to[d]}
\left(\prod_{i=1}^j b_{\tau(i)}\right)
A_{m(\tau)}(\mathbf x_0).
\end{equation}
Since \(j\le d\), the right-hand side is a polynomial in
\(\mathbf b\) and \(\mathbf x_0\).

We next extract \(b_1\cdots b_d\). If \(\tau\) is not injective,
then \(\prod_i b_{\tau(i)}\) contains a repeated variable and
cannot contribute. For an injection \(\phi:[j]\hookrightarrow[d]\),
the selected labels already occur once, and the remaining labels
must come from \(s^{d-j}\). Their coefficient is
\begin{equation}\label{eq:factorial}
\left[\prod_{t\notin\im\phi}b_t\right]
(b_1+\cdots+b_d)^{d-j}=(d-j)!.
\end{equation}
Consequently,
\begin{equation}\label{eq:outgoing}
[a_1\cdots a_jb_1\cdots b_d]P_F
=(d-j)!\sum_{\phi:[j]\hookrightarrow[d]}
A_{m(\phi)}(\mathbf x_0).
\end{equation}

To identify the right hand side of \eqref{eq:outgoing}, fix an injection \(\phi\).
By construction, \(F_\phi\) retains exactly the virtual edges
\(e_{i,\phi(i)}\). Setting all other virtual-edge weights to zero
therefore gives
\[
P_{F_\phi}\bigl((y_{i,\phi(i)})_{i\in[j]},\mathbf x_0\bigr)
=\left.P_{\widehat F}(\mathbf y,\mathbf x_0)
\right|_{y_{it}=0\;\text{for }t\ne\phi(i)}.
\]
Apply this specialization to the expansion~\eqref{eq:virtual-expansion}.
Only terms with \(m_{it}=0\) whenever \(t\ne\phi(i)\) remain.
Among these terms, extracting \(\prod_i y_{i,\phi(i)}\) requires
\(m_{i,\phi(i)}=1\) for every \(i\). These conditions specify exactly
the exponent array \(m(\phi)\), so
\[
\left[\prod_{i=1}^j y_{i,\phi(i)}\right]P_{F_\phi}
=A_{m(\phi)}(\mathbf x_0).
\]
The base edges of \(F_\phi\) are precisely these selected virtual
edges and the edges in \(E_0\). Thus
\[
\begin{aligned}
\left[\prod_{e\in E_0}x_e\right]A_{m(\phi)}(\mathbf x_0)
&=\left[
\left(\prod_{i=1}^j y_{i,\phi(i)}\right)
\left(\prod_{e\in E_0}x_e\right)
\right]P_{F_\phi}\\
&=c(F_\phi).
\end{aligned}
\]
Taking the coefficient of \(\prod_{e\in E_0}x_e\) in
\eqref{eq:outgoing} now gives
\[
c(F)=(d-j)!\sum_{\phi:[j]\hookrightarrow[d]}c(F_\phi),
\]
as claimed.

At each surviving vertex \(v\), every edge \(v\to u\) is replaced
by one composed edge with source \(v\). The outdegree of \(v\)
and the size of its fiber are therefore unchanged, so \(F_\phi\)
is balanced.
The deleted fiber contributes \(d=|S_u|-1\) to \(D(F)\), giving
\(D(F_\phi)=D(F)-d<D(F)\).
\end{proof}
 \begin{example}
    For
Example~\ref{ex:elimination}, the coefficient extraction can
be seen explicitly. Put
\[
s=b_1+b_2+b_3+b_4.
\]
Consider a term in \(P_{\widehat F}\) whose monomial in
\(\mathbf y\) is \(y_{11}y_{22}\). Under the Schur substitution,
this monomial, together with the factor \(s^4\), becomes
\[
\left.s^4y_{11}y_{22}\right|_{y_{it}=a_ib_t/s}
=a_1a_2b_1b_2s^2.
\]
Thus its multiplier in the coefficient of
\(a_1a_2b_1b_2b_3b_4\) is
\[
[b_3b_4]s^2=2=2!.
\]
The two contributions correspond to the two orders of the
unmatched outgoing labels \(3,4\).

By comparison, selecting \(y_{11}y_{21}\) uses outgoing label
\(1\) twice and gives
\[
\left.s^4y_{11}y_{21}\right|_{y_{it}=a_ib_t/s}
=a_1a_2b_1^2s^2.
\]
This cannot contribute to the required coefficient because
\(b_1\) already has exponent two.

There are \(4\cdot3=12\) injections from the two incoming labels
to the four outgoing labels. Each has two unmatched outgoing
labels and therefore the same multiplier \(2!\).
The recurrence specializes to
\[
c(F)=2\sum_{\phi:[2]\hookrightarrow[4]}c(F_\phi).
\]
\end{example}
\begin{remark}
The auxiliary system \(\widehat F\) need not be balanced. We use no
induction hypothesis or positivity assertion for its determinant;
it serves only to derive the coefficient identity. Induction applies
to the smaller balanced systems \(F_\phi\).
When \(j=0\), there is one empty injection and the multiplier is \(d!\).
\end{remark}

\section{Proofs of the positivity theorems}\label{sec:proofs}

\begin{proof}[Proof of Theorem~\ref{thm:main}]

By source homogeneity and Lemma~\ref{lem:polarization}, it
suffices to show \(c(F)\ge0\) for balanced systems.
Induct on \(D=D(F)\).
If \(D=0\), all fibers are singletons and balance leaves no base
edges, so \(c(F)=1\).

Suppose \(D>0\).
If the base contains a loop \(e:u\to u\),
Corollary~\ref{cor:loop} gives
\[
 c(F)=
 \sum_{\substack{p\in S_u\\f_e(p)\ne p}}c(F_{e,p}),
\]
where every \(F_{e,p}\) is balanced and satisfies
\(D(F_{e,p})=D-1\).

Otherwise the base is loopless. Since balance gives
\(|E(G)|=D>0\), Lemma~\ref{lem:choice} provides a vertex \(u\)
with
\[
 d=\outdeg_G(u)>0,\qquad j=\indeg_G(u)\le d.
\]
Proposition~\ref{prop:elimination} gives
\[
 c(F)=(d-j)!\sum_{\phi:[j]\hookrightarrow[d]}c(F_\phi),
\]
where every \(F_\phi\) is balanced and satisfies
\(D(F_\phi)=D-d<D\).
In either case, the induction hypothesis gives \(c(F)\ge0\).

For every exponent vector allowed by source homogeneity, the
polarization coefficient identity~\eqref{eq:polarization} reads
\[
 [\mathbf x^\alpha]P_F
 =\frac{c(F^\alpha)}{\prod_{e\in E(G)}\alpha_e!}.
\]
Thus every coefficient of \(P_F\) is nonnegative. Integrality
was established in Lemma~\ref{lem:arborescence-sum} by dividing
the graph determinants by the monic base factor.
\end{proof}

\begin{proof}[Proof of Proposition~\ref{prop:cover-ratio} and
Theorem~\ref{thm:covers}]
The covering matrix-tree theorem and its integrality corollary
\cite[Theorem~1.3 and Corollary~1.6]{chepuri2022}
give a common polynomial \(R(\mathbf x)\in\Z[\mathbf x]\) such that
\(A_p(\widetilde G)=R(\mathbf x)A_v(G)\) whenever
\(A_v(G)\not\equiv0\), for every lift \(p\in S_v\).
If \(A_v(G)\equiv0\), then \(A_p(\widetilde G)\equiv0\)
for every \(p\in S_v\): an arborescence rooted at \(p\)
would project to directed walks from every base vertex to \(v\),
forcing \(G\) to have an arborescence rooted at \(v\).
Thus the same identity holds for every vertex \(v\).
Summing over all roots yields
\[
 \sum_{p\in V(\widetilde G)}A_p(\widetilde G)
 =kR(\mathbf x)\sum_{v\in V(G)}A_v(G).
\]
On the other hand, the arborescence-sum identity of
Lemma~\ref{lem:arborescence-sum} gives
\[
 \sum_{p\in V(\widetilde G)}A_p(\widetilde G)
 =P_F(\mathbf x)\sum_{v\in V(G)}A_v(G).
\]
Since every \(A_v(G)\) has nonnegative coefficients and
\(A_r(G)\not\equiv0\), the polynomial
\(\sum_{v\in V(G)}A_v(G)\) is nonzero.
Comparing the two arborescence-sum identities and cancelling
this sum give \(P_F=kR\), proving
Proposition~\ref{prop:cover-ratio}.
Since \(R\in\Z[\mathbf x]\) by the cited integrality result
and \(R=P_F/k\) has nonnegative coefficients by
Theorem~\ref{thm:main}, we obtain
\(R\in\Z_{\ge0}[\mathbf x]\), proving
Theorem~\ref{thm:covers}.
\end{proof}

\section{Combinatorial interpretations}
\label{subsec:combinatorial-interpretation}

For \(k=2\) (i.e. double covers), the arborescence ratio has an explicit interpretation
in terms of negative vector fields. A \emph{vector field} \(H\) on a directed graph \(G\)
is a choice of exactly one outgoing edge at each vertex.
Each connected component of \(H\) (in the undirected sense) has a unique
directed cycle, with trees directed toward it.

Fix a double cover \(\widetilde G\to G\), and write
\(S_u=\{u_1,u_2\}\) for each vertex \(u\).
Let \(\mathfrak S_2\) denote the symmetric group on \(\{1,2\}\).
For an edge \(e:u\to v\), define \(\sigma_e\in\mathfrak S_2\) by
\(f_e(u_i)=v_{\sigma_e(i)}\), \(i=1,2\).
Let \(\operatorname{sgn}:\mathfrak S_2\to\{\pm1\}\) be the sign homomorphism.

\begin{definition}[Negative vector field]
\label{def:negative-vector-field}
A vector field \(H\) on \(G\) is \emph{negative} if every directed
cycle \(C=(e_1,\ldots,e_\ell)\) of \(H\) satisfies
\(
 \prod_{i=1}^{\ell}\operatorname{sgn}(\sigma_{e_i})=-1.
\)
\end{definition}

\begin{proposition}[Negative vector fields {\cite[Corollaries~1.5 and~4.5]{chepuri2022}}]
\label{prop:negative-vector-fields}
Let \(\widetilde G\to G\) be a double cover, let \(r\in V(G)\)
satisfy \(A_r(G)\not\equiv0\), and let \(\tilde r\) lie over \(r\).
Then
\[
 \frac{A_{\tilde r}(\widetilde G)}{A_r(G)}
 =\sum_{H\in\mathcal N}
   2^{c(H)-1}\prod_{e\in E(H)}x_e,
\]
where \(\mathcal N\) is the set of negative vector fields on \(G\),
and \(c(H)\) is the number of components of \(H\).
\end{proposition}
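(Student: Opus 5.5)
Since the proposition is quoted from Chepuri et al., I would rederive it within the present framework from the identity \(A_{\tilde r}(\widetilde G)/A_r(G)=\tfrac12P_F\) of Proposition~\ref{prop:cover-ratio}. That identity reduces the claim to
\[
 P_F=\sum_{H\in\mathcal N}2^{c(H)}\prod_{e\in E(H)}x_e
\]
for a double cover \(F\).

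\textbf{Step 1: a \(1\times1\) quotient block.} For a double cover every \(U_v\) has rank one, spanned by the class of the coordinate vector at \(v_1\). The class of \(\ind_v\) is zero, so the class at \(v_2\) equals minus the class at \(v_1\). Hence the pullback \(T_e\) for \(e:u\to v\) acts as the scalar \(\operatorname{sgn}(\sigma_e)\). By the block formula~\eqref{eq:blocks}, \(Q_F\) is then the \(V(G)\times V(G)\) matrix
\[
 Q_F[u,v]=\delta_{uv}\sum_{\src(e)=u}x_e-\sum_{e:u\to v}\operatorname{sgn}(\sigma_e)\,x_e ,
\]
which is a signed Laplacian of \(G\).

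\textbf{Step 2: expand the determinant.} I would expand \(\det Q_F\) by choosing, in each row \(u\), one edge \(e_u\) with source \(u\). The diagonal term contributes \(x_{e_u}\), and the off-diagonal term contributes \(-\operatorname{sgn}(\sigma_{e_u})x_{e_u}\) in column \(\mathrm{tgt}(e_u)\). A loop contributes \(x_e(1-\operatorname{sgn}\sigma_e)\) on the diagonal, which fits the same pattern. By multilinearity in rows,
\[
 P_F=\sum_{H}\Bigl(\prod_{u}x_{e_u}\Bigr)\det N_H ,
\]
where \(H=(e_u)_u\) ranges over vector fields and \(N_H=I-S_H\). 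Here \(S_H\) is the signed functional matrix with entry \(\operatorname{sgn}(\sigma_{e_u})\) at \((u,\mathrm{tgt}(e_u))\).

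\textbf{Step 3: evaluate \(\det(I-S_H)\) and conclude.} I expect this evaluation to be the main, though modest, step. Consider the functional graph of \(H\). Each vertex in a tree part has a row whose nonzero pattern leads toward the cycle, so ordering tree vertices from leaves to cycle makes the matrix block triangular with identity blocks for those vertices. The determinant therefore factors over the cycles of \(H\). For a single cycle of length \(\ell\) with sign product \(\varepsilon\), the matrix \(I-S\) is identity minus a signed cyclic permutation, and \(\det(I-S)=1-\varepsilon\). This equals \(2\) for a negative cycle and \(0\) for a positive one; it should be checked against the loop case \(\ell=1\). Each component of \(H\) contains exactly one cycle, so
\[
 \det N_H=\prod_{\text{cycles}}(1-\varepsilon_C),
\]
which is \(2^{c(H)}\) when \(H\) is negative and \(0\) otherwise. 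Substituting into Step~2 and dividing by \(k=2\) gives the stated formula with the factor \(2^{c(H)-1}\).
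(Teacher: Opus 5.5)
Your proposal is correct, and it takes a different route from the paper, which gives no proof of this proposition. The paper imports the statement from Chepuri et al.\ (Corollaries~1.5 and~4.5) and only reviews it in Section~\ref{subsec:combinatorial-interpretation} as background for Problem~\ref{prob:combinatorial-interpretation}.

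You instead derive it inside the paper's quotient-Laplacian framework, in three steps:
\begin{enumerate}
\item Proposition~\ref{prop:cover-ratio} reduces the claim to computing \(P_F\).
\item For \(k=2\), each \(U_v\) has rank one and \(T_e=\operatorname{sgn}(\sigma_e)\), so \(Q_F\) is the signed outgoing Laplacian.
\item Row-multilinearity expands \(\det Q_F\) over vector fields, and \(\det(I-S_H)=\prod_C(1-\varepsilon_C)\) is a directed cycle-rooted-forest evaluation.
\end{enumerate}
All three steps check out, including the loop case and the degenerate case where some vertex has outdegree zero (then both sides vanish).

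What your approach buys is transparency. It shows directly why positivity is immediate for \(k=2\), without the contraction and elimination induction. The same computation also applies verbatim to finite-fiber systems whose fibers all have size two and whose maps are arbitrary. There a constant \(f_e\) gives \(T_e=0\), so each cycle factor is \(1-\varepsilon_C\in\{0,1,2\}\), which confirms Theorem~\ref{thm:main} in that case.

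What it does not buy is independence from the cited source. Proposition~\ref{prop:cover-ratio} is itself proved in the paper from the root-independence result of Chepuri et al.\ (their Theorem~1.3 and Corollary~1.6). So your argument replaces their Corollaries~1.5 and~4.5 but still rests on that input.
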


For general \(k\), a comparable interpretation remains open.
The following problem was posed in
\cite[Problem~5.4]{chepuri2022}.

\begin{samepage}
\begin{problem}[Direct combinatorial interpretation]
\label{prob:combinatorial-interpretation}
For a \(k\)-fold cover \(\widetilde G\to G\) with
\(A_r(G)\not\equiv0\) and an exponent vector
\(\alpha\in\Z_{\ge0}^{E(G)}\), construct a naturally defined finite set
\(\mathcal C_\alpha(\widetilde G/G)\) satisfying
\[
\bigl|\mathcal C_\alpha(\widetilde G/G)\bigr|
=
[\mathbf x^\alpha]\,
\frac{A_{\tilde r}(\widetilde G)}{A_r(G)}.
\]
The objects should be specified directly in terms of the cover
and \(\alpha\), without choosing a sequence of contractions
and vertex eliminations.
\end{problem}
\end{samepage}

A weight-preserving bijection between arborescences of
\(\widetilde G\) rooted at \(\tilde r\) and pairs consisting of
an arborescence of \(G\) rooted at \(r\) and an object in
\(\bigsqcup_\alpha\mathcal C_\alpha(\widetilde G/G)\)
would explain the factorization of arborescence polynomials.
Here an object in \(\mathcal C_\alpha(\widetilde G/G)\)
has weight \(\mathbf x^\alpha\), and the weight of a pair
is the product of its two weights.

One may also ask for a direct interpretation of
\([\mathbf x^\alpha]P_F\) for arbitrary finite-fiber systems.
An interpretation compatible with specialization to covers
would ideally explain the factor \(k\) in~\eqref{eq:cover-ratio}.

\section*{Acknowledgments}
The authors are grateful to Sylvester W. Zhang for introducing
the problem.

\section*{Declaration of AI assistance}
The authors worked on this problem during PACE 2024, where they
developed ideas similar to the substitution in
Lemma~\ref{lem:polarization} and the loop-deletion argument.
Recently, GPT-6 Astra proposed extending the induction to
finite-fiber systems, which allowed the proof to be completed.
The authors verified and rewrote the proof, adding further
explanations where appropriate. They take full responsibility
for all content in the paper.

% \section{Proof of the Directed rooted-forest identity}
% \begin{lemma}[Directed rooted-forest identity]\label{lem:rooted-forest}
% Let \(F\) be a finite weighted directed multigraph, and let \(L_F\) be its outgoing Laplacian. Then
% \[
% \det(zI+L_F)
% =\sum_{\mathcal T}z^{r(\mathcal T)}\wt(\mathcal T),
% \]
% where the sum ranges over spanning directed forests \(\mathcal T\) of \(F\), with every component directed toward its unique root. Here \(r(\mathcal T)\) is the number of roots, and \(\wt(\mathcal T)\) is the product of the weights of its edges. Isolated vertices count as rooted components.
% \end{lemma}

% \begin{proof}
% Adjoin a new vertex \(\ast\) to \(F\), together with an edge \(v\to\ast\) of weight \(z\) for every vertex \(v\) of \(F\). The outgoing Laplacian of this augmented graph, with the row and column corresponding to \(\ast\) removed, is \(zI+L_F\). By the directed matrix-tree theorem, its determinant is the sum of the weights of all spanning arborescences directed toward \(\ast\).

% Deleting \(\ast\) and its incident edges from such an arborescence leaves a spanning directed forest of \(F\). Each component is rooted at the vertex whose outgoing edge led to \(\ast\). Conversely, adjoining an edge of weight \(z\) from each forest root to \(\ast\) produces a unique arborescence rooted at \(\ast\). This bijection assigns a forest \(\mathcal T\) the weight \(z^{r(\mathcal T)}\wt(\mathcal T)\), proving the identity.
% \end{proof}

\enlargethispage{2\baselineskip}
\bibliographystyle{amsplain}
\bibliography{references}
\end{document}